\newcommand{\esp}{\hspace{0.06cm}}
\newcommand{\R}{\mathbb R}
\newcommand{\Diff}{\mathrm{Diff}}
\newcommand{\var}{\mathrm{var}}
\newcommand{\id}{\mathrm{id}}
\newcommand{\Rplus}{\mathbb{R}_{+}}
\theoremstyle{theorem}
\newtheorem{thmintro}{Theorem}
\newtheorem{thm}{Theorem}[section]
\newtheorem{prop}[thm]{Proposition}
\newtheorem{lem}[thm]{Lemma}
\newtheorem{qs}[thm]{Question}
\theoremstyle{definition}
\theoremstyle{remark}
\newtheorem{rem}[thm]{Remark}
\newtheorem{ex}[thm]{Example}
\begin{document}

\pagestyle{fancy}
\rhead{On the linearization of $C^1$ hyperbolic germs in dimension one}

\date{}
\author{H\'el\`ene Eynard-Bontemps \,\, \& \,\, Andr\'es Navas}

\title{On the failure of linearization for germs of 
$C^1$ hyperbolic vector fields in dimension one}
\maketitle
\begin{footnotesize}
\noindent {\bf H\'el\`ene Eynard-Bontemps} \hfill{\bf Andr\'es Navas}

\noindent helene.eynard-bontemps@univ-grenoble-alpes.fr \hfill{andres.navas@usach.cl}

\noindent Institut Fourier \hfill{ Dpto. de Matem\'aticas y C.C.}

\noindent  Universit\'e Grenoble Alpes \hfill{ Universidad de Santiago de  Chile}

\noindent 100 rue des Math\'ematiques \hfill{Alameda Bernardo O'Higgins 3363}

\noindent 38610 Gi\`eres, France \hfill{Estaci\'on Central, Santiago, Chile} 

\end{footnotesize}
\bigskip

\noindent{\bf Abstract:} We investigate conjugacy classes of germs of hyperbolic 1-dimensional vector fields at the origin in low regularity. We show that the classical linearization theorem of Sternberg strongly fails in this setting by providing  explicit uncountable families of mutually non-conjugate flows with the same multipliers, where conjugacy is considered in the bi-Lipschitz, $C^1$ and $C^{1+ac}$ settings.

\vspace{0.3cm}

\noindent{\bf Keywords:} vector field, diffeomorphism, hyperbolicity, linearization.

\vspace{0.3cm}

\noindent{\bf MSC 2020:} 37D99, 37E05, 37G05.

\vspace{0.2cm}

\noindent{\bf Acknowledgments.} 
We would like to thank \'Etienne Ghys for having asked the question of existence of non-linearizable hyperbolic vector fields of class $C^1$ in relation to 
his work with Grant Cairns on linearization of local $\mathrm{SL}(n)$-actions \cite{CG}. We would also like to thank Jan Kiwi for his interest on the subject, 
as well as Fr\'ed\'eric Le Roux for several clever remarks. Some material of this work arose following a question of Michele Triestino about normalizers of 
flows near a singularity and we thank him for stimulating discussions. 

H. Eynard-Bontemps was detached at the CMM of University of Chile during the elaboration of this work and was partially funded by CNRS and 
by the IRGA project ADMIN of Grenoble INP - Universit\'e Grenoble Alpes. A. Navas was supported by the Fondecyt research project 1220032.

\vspace{0.75cm}

Starting with Poincar\'e, the study of the behavior of a flow dynamics near a hyperbolic singularity or a map near a hyperbolic fixed point has been treated by numerous authors. A basic tool for this is the classical Grobman-Hartman theorem, which establishes that in a neighborhood of such a point, the dynamics is topologically conjugate to that of the corresponding linear system. There is a large amount of literature concerning the regularity of this conjugacy for smooth-enough vector fields and maps. In higher dimension, a problem of resonance between the multipliers may involve a huge loss of regularity even for analytic systems, but the loss of regularity for the conjugacy map is very well controlled in the non-resonant setting; see \cite{stowe} for general results on this in the case of planar maps and \cite{zhang} for a more recent sharper result.

In the 1-dimensional case there is no resonance and, hence, no loss of regularity for the conjugacy. In concrete terms, every $C^r$-germ of orientation-preserving diffeomorphism at a hyperbolic fixed point (\emph{i.e.} a fixed point where the derivative differs from $1$) is locally conjugate to the corresponding linear map on a neighborhood of the point via a $C^r$ local diffeomorphism, where $r$ is any number strictly greater than 1. (Here, $r$ can take non-integer values, and in this case the assumption is that the derivative of order $[r]$ is H\"older continuous with exponent $\{ r \}$.) This statement (which easily extends to hyperbolic flows) is usually referred to as Sternberg's linearization theorem \cite{sternberg}, though this sharp version is due to Yoccoz~\cite{yoccoz} for integer values of $r$, and has been extended to non integer values by many people (see for instance \cite{mann}) and used in several different contexts (see  \cite{DKN} and \cite{N-jam}). The goal of this work is to deeply elaborate on the fact that, for $r=1$, this \emph{smooth} linearization result fails in the 1-dimensional setting.
 
 For simplicity, unless otherwise stated, in this work, we will use the term ``germ" to refer to germs of diffeomorphisms / vector fields on the real line with a fixed point / singularity at the origin. Sternberg himself gave an example of a germ of hyperbolic $C^1$ diffeomorphism that is not $C^1$ linearizable, namely 
\begin{equation}\label{sternberg-ex}
x \mapsto  a x \left( 1-\frac{1}{\log (x)} \right),
\end{equation}
where $a < 1$. Moreover \cite[Exercise 4.1.12]{book} explains how to upgrade this example so that the germ becomes the time-$1$ map of a $C^1$ vector field with absolutely continuous derivative. The first result of this work is a very general version of this phenomenon for vector fields.

\begin{thmintro}
\label{t:A}
Every germ of hyperbolic $C^1$ vector field is contained in a $C^1$-continuous path of such germs, all of 
them with the same multiplier at the origin, and whose time-1 maps are pairwise non bi-Lipschitz conjugate.
\end{thmintro}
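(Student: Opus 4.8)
The plan is to attach to each germ a numerical invariant measuring the deviation of the flow from its linear model, which is preserved up to a bounded additive error under bi-Lipschitz conjugacy, and then to vary this invariant continuously while fixing the multiplier. Write the germ as $X=f\,\partial_x$ with $f(0)=0$ and $f'(0)=\lambda\neq 0$; reversing time if necessary (replacing $X$ by $-X$) I may assume $\lambda<0$, so the origin is attracting from both sides. Fixing a base point $x_0>0$, I set for $0<y<x_0$
\[ D_f(y) \;=\; \int_{y}^{x_0}\Big(\frac{1}{-f(s)}-\frac{1}{|\lambda|\,s}\Big)\,ds, \]
the difference between the time to flow from $x_0$ to $y$ under $X$ and under the linear field $\lambda x\,\partial_x$. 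Since $f(s)=\lambda s+o(s)$, the integrand is $o(1/s)$, whence $D_f(y)=o\big(\log(1/y)\big)$; in the $C^{1+\alpha}$ case it stays bounded, recovering Sternberg linearizability, so unbounded growth of $D_f$ is exactly the obstruction I want to exploit.

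The key point is that $D_f$ is read off the time-$1$ map $g=\varphi_X^1$. Writing $x_n=g^n(x_0)\to 0$ and using $n=\int_{x_n}^{x_0}ds/(-f(s))$, a direct computation gives $\theta_n:=\log|x_n|-\log|x_0|-n\log a=|\lambda|\,D_f(x_n)$, where $a=e^\lambda$. The sequence $(\theta_n)$ is a bi-Lipschitz invariant up to $O(1)$: a conjugacy $h$ between the time-$1$ maps of two such germs satisfies $h(0)=0$, and bi-Lipschitzness forces $\log\big(|h(x)|/|x|\big)$ to be bounded; comparing the orbits $x_n$ and $h(x_n)=g_s^{\,n}(h(x_0))$ then yields $\theta_n^{(s)}-\theta_n^{(t)}=\log\frac{|h(x_n)|}{|x_n|}-\log\frac{|h(x_0)|}{|x_0|}=O(1)$ uniformly in $n$. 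Crucially this uses only the discrete conjugacy of the time-$1$ maps, not a conjugacy of the full flows.

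Given $X=f\,\partial_x$, I would then set $f_t(s)=f(s)+t\,\lambda^2\,\frac{s}{\log(1/|s|)}$ for $t\in[0,1]$ (an odd perturbation near $0$, hence defined on a two-sided neighborhood), with $X_0=X$. The added term vanishes at $0$ together with its derivative, so each $X_t$ is a $C^1$ hyperbolic germ with the \emph{same} multiplier $\lambda$, and $t\mapsto X_t$ is continuous in the $C^1$ topology because $\|s/\log(1/|s|)\|_{C^1}$ is finite on a small neighborhood. A short computation gives $\frac{1}{-f_t}-\frac{1}{-f}\sim \frac{t}{s\log(1/s)}$, hence $D_{f_t}(y)=D_f(y)+t\,\log\log(1/y)+O(1)$. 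Evaluating at $y=x_n^{(t)}$, for which $\log(1/x_n^{(t)})\sim|\lambda|\,n$ (because the defect is $o(\log(1/x_n^{(t)}))$), I obtain the asymptotics $\theta_n^{(t)}=|\lambda|\,D_f(x_n^{(t)})+|\lambda|\,t\,\log n+O(1)$.

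To conclude that $g_t$ and $g_s$ are not bi-Lipschitz conjugate for $t\neq s$, I compare the two asymptotics: $\theta_n^{(t)}-\theta_n^{(s)}=|\lambda|\big(D_f(x_n^{(t)})-D_f(x_n^{(s)})\big)+|\lambda|(t-s)\log n+O(1)$. The \emph{main obstacle} is that the base defect $D_f$ may itself be large, so the two terms $D_f(x_n^{(t)})$ and $D_f(x_n^{(s)})$ cannot be compared naively, since the orbits $x_n^{(t)},x_n^{(s)}$ need not be comparable a priori. I would resolve this by a bootstrap: writing $A_n=\theta_n^{(t)}-\theta_n^{(s)}$, the difference of defects is an integral of $o(1/s)$ over an interval of logarithmic length $|A_n|+O(1)$, hence equals $o(1)\,(|A_n|+O(1))$; feeding this back yields $A_n\big(1-o(1)\big)=|\lambda|(t-s)\log n+O(1)$, so $A_n\to\pm\infty$. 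This contradicts the $O(1)$ bound from the invariance established above, proving non-conjugacy. As the construction is symmetric under $x\mapsto -x$, the same invariant on the negative side rules out orientation-reversing conjugacies that swap the two sides, completing the argument.
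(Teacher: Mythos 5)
Your proposal is correct and is essentially the paper's argument: your time--defect $D_f$ and the $O(1)$-invariance of $\theta_n$ under a bi-Lipschitz conjugacy of the time-$1$ maps (using only $h(0)=0$ and the discrete orbit relation) is precisely the content of the implication (ii)$\Rightarrow$(i) in Proposition \ref{crit-conj-bil}, while your additive perturbation $f+t\lambda^2 s/\log(1/|s|)$ changes $1/X$ by $t/(s\log(1/s))$ only to leading order, whereas the paper's multiplicative choice $X_\alpha=X/\bigl(1+\tfrac{\alpha X}{x\log x}\bigr)$ gives $\tfrac{1}{X_\alpha}-\tfrac{1}{X_\beta}=\tfrac{\alpha-\beta}{x\log x}$ exactly and thus lets one compare two perturbed fields directly, dispensing with your (correct, but otherwise necessary) bootstrap for the non-comparable orbits. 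The one loose end is your last sentence on orientation-reversing conjugacies: comparing the positive-side invariant of $g_t$ with the negative-side invariant of $g_s$ leaves the uncontrolled difference $D_f^+(x_n)-D_f^-(y_n)$ of the unperturbed defects on the two sides, which are uncorrelated and only $o(n)$, so that case needs a further idea --- though the paper, which works on $\R_+$ where every conjugacy is increasing and defers the two-sided germ setting to the reader, does not address it either.
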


Here, by ``multiplier'' of a germ of $C^1$ vector field $X$ we just mean the value~$\lambda := DX(0)$, and hyperbolicity means that this number differs from $0$, so that the derivative at the origin of any non-identity flow map is different from $1$. Indeed, in terms of the flow, this number is also characterized by the relation $Df (0) = e^{\lambda}$, where $f$ is the time-1 map of the flow of $X$ ({\em cf.} Lemma \ref{l:form-vf}). Moreover, it is invariant under bi-Lipschitz conjugacy ({\em cf.} Lemma \ref{l:pres-mult}).

Our techniques allow  to build explicit families of exotic vector fields, and are not only existential as those based, for instance, on Anosov-Katok's method (compare \cite[Proposition 4.15]{BMNR}). Many of the (germs of) vector fields that we exhibit are indeed of class $C^{1+ac}$, where $ac$ stands for {\em absolutely continuous}. In particular, their flows yield diffeomorphisms with absolutely continuous (hence of bounded variation) derivative ({\em cf.} Proposition \ref{p:flow-ac}). As concrete examples, this is the case of the vector fields $X_{\alpha}$ defined, for small $x > 0$, by 
$$X_{\alpha} (x) = - \frac{x}{1 + \frac{\alpha}{\log(x)}}.$$
The corresponding  flows are shown to be pairwise non bi-Lipschitz conjugate.  Actually, up to composition with a map in the flow, the unique homeomorphism conjugating the flow of $X_{\alpha}$ to the standard linear flow $(t,x)\mapsto e^{-t}\, x$ of $X_0$ is 
$$H_{\alpha} (x) = x \, (| \log(x)|)^{\alpha},$$
which fails to be bi-Lipschitz. This shows that, though Sternberg's linearization holds in the H\"older scale, there is no analogue of it in the $C^{1+ac}$ setting... even for flows! \footnote{As a straightforward computation shows, Sternberg's germ of diffeomorphism (\ref{sternberg-ex}) has absolutely continuous derivative as well; by \cite{EN}, it embeds into a $C^1$ flow (we do not know whether it embeds into a $C^{1+ac}$ flow, but it follows also from \cite{EN} that it is at least $C^1$ conjugate to a diffeomorphism that embeds in such a flow). However, Sternberg proved that it is not bi-Lipschitz conjugate to its linear part.}

\vspace{0.1cm}

Our next result is the analog of Theorem \ref{t:A} for $C^1$ conjugacies.

\begin{thmintro} 
\label{t:B}
Every germ of hyperbolic $C^1$ vector field is contained in a $C^1$-continuous path of such germs, all of them with the same 
multiplier at the origin, and whose time-1 maps of the flows are pairwise bi-Lipschitz conjugate though non $C^1$-conjugate.
\end{thmintro}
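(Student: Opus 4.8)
The plan is to reproduce the mechanism behind Theorem~\ref{t:A}, but to replace the \emph{unbounded} logarithmic distortion $|\log x|^{\alpha}$ used there by a \emph{bounded but oscillating} one, so that the resulting conjugacies become bi-Lipschitz while still failing to be $C^1$. Fix a germ $X$ with multiplier $\lambda$ and work on $x>0$ (the side $x<0$ being symmetric). For $s$ in a neighborhood of $0$ I would set
\[
H_s(x) \;=\; x\,\exp\!\bigl(s\,\sin(\log(-\log x))\bigr),
\]
a homeomorphism fixing $0$, and define $X_s$ to be the pull-back of $X$ by $H_s$, i.e. the germ whose flow is $H_s^{-1}\circ \phi^X_t\circ H_s$; in particular $X_0=X$ and $X_s(x)=X(H_s(x))/H_s'(x)$. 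Writing $t=-\log x$ and $g_s(t)=\exp(s\sin(\log t))$, the factor $g_s$ and its first two derivatives stay bounded with $g_s',g_s''\to0$ as $t\to\infty$, which is exactly what makes the construction go through.

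The first block of verifications should be routine. Using the quotient formula together with $X_s'(x)=X'(H_s(x))-X(H_s(x))H_s''(x)/H_s'(x)^2$, the bounds on $g_s,g_s',g_s''$ give $X_s(x)/x\to\lambda$ and $X_s'(x)\to\lambda$ as $x\to0$, so each $X_s$ is a $C^1$ hyperbolic germ with the same multiplier $\lambda$; the same estimates, now taken uniform in $s$, show that $s\mapsto X_s$ is a $C^1$-continuous path through $X$. For the positive (bi-Lipschitz) half of the statement, observe that $H_s'(x)=g_s(t)-g_s'(t)$ is bounded above and away from $0$, so each $H_s$ is bi-Lipschitz; hence $H_{s'}^{-1}\circ H_s$ is a bi-Lipschitz conjugacy between the two flows — and therefore between their time-$1$ maps — for every pair $s,s'$.

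The heart of the proof is the failure of $C^1$ conjugacy. At the level of flows this is immediate: the conjugacy $H_{s'}^{-1}\circ H_s$ has distortion $H_{s'}^{-1}(H_s(x))/x\sim \exp\!\bigl((s-s')\sin(\log(-\log x))\bigr)$, which oscillates inside a non-degenerate interval as $x\to0$ and so has no limit, whence this map is not even differentiable at $0$; by uniqueness of the flow-conjugacy up to composition with a (smooth) flow map, the flows are not $C^1$-conjugate when $s\neq s'$. The genuine obstacle, however, is to rule out a $C^1$ conjugacy $K$ between the \emph{time-$1$ maps} $f_s$ and $f_{s'}$, since the centralizer of a time-$1$ map is vastly larger than that of the flow: in the logarithmic (Fatou) coordinate $t$, in which $f_s$ becomes the translation $t\mapsto t+1$ and $H_s$ becomes $t\mapsto t-s\sin(\log t)+o(1)$, any such $K$ corresponds to
\[
\Theta(t)-t \;=\; p\bigl(t-s\sin(\log t)\bigr)\;+\;(s'-s)\sin(\log t)\;+\;o(1),
\]
where $p$ is an \emph{arbitrary} $1$-periodic function produced by that centralizer (the Mather-type ambiguity), and $K$ is $C^1$ only if $\Theta(t)-t$ converges.

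The two oscillating terms above live on incomparable scales — the periodic term oscillates with period $\approx 1$ in $t$, the parameter term with period growing like $t$ — and this scale separation is precisely what I would exploit. Choosing sequences $t_n\to\infty$ along which $\sin(\log t_n)\to1$ while the fast argument $t_n-s\sin(\log t_n)$ is pinned to a prescribed residue mod~$1$ forces $p$ to be constant (the relevant intervals are long enough to hit every residue, so no equidistribution is needed); after that, the surviving slow term $(s'-s)\sin(\log t)$ prevents convergence unless $s=s'$. Making this last step fully rigorous is where the main difficulty lies, especially for a general, possibly non-linearizable, germ $X$: there one cannot use the exact logarithm and must instead run the same argument in the Fatou coordinate of $X$ itself, checking that the slow parameter-oscillation still cannot be absorbed by any fast periodic term.
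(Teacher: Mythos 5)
Your construction is the paper's: the conjugating maps $H_s(x)=x\,e^{s\sin(\log|\log x|)}$ are exactly the maps $\tilde H_\alpha$ of the paper, and the resulting germs coincide (up to the form of the perturbation) with the family $\tilde X_\alpha$ used there, so the setup, the preservation of the multiplier, the $C^1$-continuity of the path, and the bi-Lipschitz half of the statement are all on track. The problem is that what you yourself identify as ``the heart of the proof'' --- ruling out a $C^1$ conjugacy $K$ between the time-1 maps that does not conjugate the flows --- is precisely the step you do not complete, and you say so (``making this last step fully rigorous is where the main difficulty lies''). As written, the proposal therefore proves only that the \emph{flows} are not $C^1$-conjugate, not that the \emph{time-1 maps} are not, which is what the theorem asserts. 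The Fatou-coordinate/scale-separation argument you sketch to kill the periodic term $p$ is not carried out, and for a general (non-linearizable) germ $X$ it is far from routine: one would also have to justify the implicit claim that $K$ is $C^1$ at the origin only if $\Theta(t)-t$ converges, which itself requires the derivative formula $DK=\frac{Y\circ K}{X}\cdot(\cdots)$ that is only available once one knows $K$ conjugates the flows.

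The missing ingredient is a soft structural fact that makes your ``genuine obstacle'' disappear: a $C^1$ version of Kopell's lemma for time-1 maps of contracting $C^1$ flows (Lemma \ref{l:kopell-gen} in the paper). It asserts that a $C^1$ diffeomorphism of $\R_+$ commuting with $f^1$ and fixing a point of $\R_+^*$ is the identity; hence the $C^1$ centralizer of $f^1$ is the flow itself (Lemma \ref{l:unique-X}), and every $C^1$ conjugacy between time-1 maps automatically conjugates the flows (Proposition \ref{l:conj-time-1}). In other words, the Mather-type ambiguity by an arbitrary $1$-periodic function $p$ simply does not exist in the $C^1$ category --- it survives only for bi-Lipschitz or topological conjugacies, which is exactly why the bi-Lipschitz half of the theorem is possible at all. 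Once this rigidity is in place, your observation that $(s'-s)\sin(\log|\log x|)$ is bounded but non-convergent, combined with the criterion of Proposition \ref{crit-conj-C1} (convergence of $\int_1^x(\frac1X-\frac1Y)$ is necessary and sufficient for $C^1$ conjugacy in the hyperbolic case), finishes the proof with no equidistribution argument and with no special treatment of non-linearizable germs. I would encourage you to prove the $C^1$ Kopell lemma (the paper's argument is a distortion estimate of a few lines) rather than pursue the oscillation-versus-periodicity analysis, which attacks a harder problem than the one you actually face.
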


Again, our methods are rather concrete and yield explicit examples of vector fields whose flows are non $C^1$ conjugate despite being bi-Lipschitz conjugate. For example, this is the case of 
$$\tilde{X}_{\alpha} (x) = - \frac{x}{1+\frac{\alpha \, cos (\log (|\log(x)|))}{\log (x)}},$$
where $x > 0$ is small. Up to composition with a member of the flow, the only conjugating map to the linear flow is 
$$\tilde{H}_{\alpha} (x) = x \, e^{\alpha \, \sin (\log ( | \log (x)| ))}.$$  
One can readily check that this map is bi-Lipschitz but not $C^1$.

\vspace{0.1cm}

Our last result deals with $C^{1+ac}$ conjugacy classes of germs of hyperbolic diffeomorphisms that are in the same $C^1$ conjugacy class. It corresponds to a weak version of Theorems \ref{t:A} and \ref{t:B} in this setting; see Remark \ref{weak} for possible extensions.

\begin{thmintro} 
\label{t:C}
There exists a $C^{1+ac}$-continuous path of germs of hyperbolic vector fields passing trough the linear vector field \, $X(x) \!=\! -x$ \, such that the time-1 maps of the flows are pairwise $C^1$ conjugate though non $C^{1+ac}$ conjugate.
\end{thmintro}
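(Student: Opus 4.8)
The plan is to reduce everything to a single scalar function together with a regularity dictionary, exactly as for Theorems \ref{t:A} and \ref{t:B}. Normalizing the multiplier to $-1$, I would work on a right neighborhood of the origin in the logarithmic coordinate $u=-\log x$, in which the linear flow of $X(x)=-x$ becomes the unit translation flow. Every germ $H$ of the form $H(x)=x\,e^{\varphi(u)}$ conjugating the linear flow to another flow produces, by a direct computation (\emph{cf.} Lemma \ref{l:form-vf}), the vector field
\[
X_\varphi(x)\;=\;-\,\frac{x}{1-\varphi'(u)},\qquad u=-\log x,
\]
all of whose members share the multiplier $-1$ as soon as $\varphi'(u)\to 0$ (here and below primes denote $d/du$). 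First I would record the dictionary obtained by rewriting the two relevant $C^{1+ac}$ conditions as integrability conditions in $u$ near $+\infty$, using that $\int_0|F'(x)|\,dx=\int^{\infty}|G'(u)|\,du$ whenever $F(x)=G(u)$: setting $w=(1-\varphi')^{-1}$, one finds $DH(x)=e^{\varphi}(1-\varphi')$ and $DX_\varphi(x)=w'-w$, whence $H$ is $C^{1+ac}$ iff $e^{\varphi}\bigl(\varphi'-(\varphi')^2-\varphi''\bigr)\in L^1$, while $X_\varphi$ is $C^{1+ac}$ iff $2(1-\varphi')^{-3}(\varphi'')^2+(1-\varphi')^{-2}(\varphi'''-\varphi'')\in L^1$. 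For $\varphi,\varphi',\dots$ tending to $0$ these simplify to: $H\in C^{1+ac}\Leftrightarrow \varphi',\varphi''\in L^1$, and $X_\varphi\in C^{1+ac}\Leftrightarrow \varphi'',\varphi'''\in L^1$.

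The heart of the matter is then to choose $\varphi$ sitting in the gap between these two conditions, that is, with
\[
\varphi''\in L^1,\quad \varphi'''\in L^1,\quad \varphi'\notin L^1,\qquad \varphi(u)\to 0,\quad \textstyle\int^{\infty}\!\varphi'\ \text{convergent}.
\]
The apparent tension is that $\varphi''\in L^1$ forces $\varphi'$ to have bounded variation and to tend to $0$, so $\varphi'\notin L^1$ requires $\varphi'$ to oscillate while carrying non-integrable mass, yet $\int^{\infty}\varphi'$ must still converge so that $\varphi$, hence $H$, is genuinely $C^1$ at the origin. Oscillation on the $\log u$ scale resolves this, and the explicit choice
\[
\varphi(u)=\frac{\sin(\log u)}{\log u}
\]
works: $\varphi'\sim \frac{\cos(\log u)}{u\log u}$ has $\int^{\infty}|\varphi'|=\int^{\infty}\frac{|\cos t|}{t}\,dt=\infty$ after $t=\log u$, whereas each further differentiation gains a genuine factor $u^{-1}$, placing $\varphi''$ and $\varphi'''$ in $L^1$. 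This already yields a single germ $X_\varphi$ of class $C^{1+ac}$ whose linearization $H$ is $C^1$ but not $C^{1+ac}$. I would then define the path by scaling, $H_\alpha(x)=x\,e^{\alpha\varphi(u)}$ and $X_\alpha:=X_{\alpha\varphi}$ for $\alpha$ near $0$, so that $X_0$ is the linear field; the $C^{1+ac}$-continuity of $\alpha\mapsto X_\alpha$ is a routine estimate, the relevant norms being linear in $\alpha$ with the $L^1$-bounds above.

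Pairwise $C^1$-conjugacy is immediate: the conjugacy between the flows of $X_\alpha$ and $X_\beta$ is $G_{\alpha\beta}=H_\beta\circ H_\alpha^{-1}$, a composition of $C^1$ diffeomorphisms. For the failure of $C^{1+ac}$-conjugacy I would compute $G_{\alpha\beta}$ in the coordinate $u$, where $H_\gamma$ reads $u\mapsto u-\gamma\varphi(u)$; expanding the composition gives $G_{\alpha\beta}(x)=x\,e^{\Phi(u)}$ with
\[
\Phi=(\beta-\alpha)\varphi+E,\qquad E=O\bigl(\varphi^2,\ \varphi\varphi'\bigr),
\]
so that $\Phi'=(\beta-\alpha)\varphi'-E'$, where the leading term is not in $L^1$ (for $\alpha\neq\beta$) while $E'=O\bigl(\varphi'^2,\varphi\varphi'',\varphi\varphi'\bigr)\in L^1$; by the dictionary, $G_{\alpha\beta}$ is not $C^{1+ac}$. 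Finally, invoking that a conjugacy between two such hyperbolic flows is unique up to composition with a time-$t$ map of the flow, together with the $C^{1+ac}$-regularity of those flow maps (Proposition \ref{p:flow-ac}) and the fact that post-composing with them cannot create the missing absolute continuity, I conclude that no $C^{1+ac}$ conjugacy exists between the time-$1$ maps for $\alpha\neq\beta$. The main obstacle is this last verification: controlling the higher-order correction $E$ so as to guarantee $E'\in L^1$, and hence that the non-integrable leading term $(\beta-\alpha)\varphi'$ genuinely survives. This, and the rigorous form of the dictionary, are where the care lies, the construction of $\varphi$ itself being elementary once the $\log u$ scale of oscillation is identified.
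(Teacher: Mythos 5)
Your construction is, up to the change of variable $u=-\log x$, exactly the one in the paper: your $\varphi(u)=\sin(\log u)/\log u$ is the paper's $s(x)=-\sin(\log(|\log x|))/\log(|\log x|)$ with the opposite sign, your $H_\alpha(x)=x\,e^{\alpha\varphi(u)}$ is the paper's $h_\alpha(x)=e^{-s_\alpha(x)}x$, and your $X_{\alpha\varphi}$ is the paper's $X_\alpha(x)=-x/(1+u_\alpha(x))$. The path by scaling, the pairwise $C^1$ conjugacy via $H_\beta\circ H_\alpha^{-1}$, and the endgame (a $C^{1+ac}$ conjugacy of the time-1 maps would conjugate the flows by Proposition \ref{l:conj-time-1}, hence equal $f_\beta^t\circ H_{\alpha,\beta}$ by Lemma \ref{l:uniq}, contradicting Proposition \ref{p:flow-ac}) all coincide with the paper's. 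Your ``dictionary'' is the logarithmic-coordinate version of Lemma \ref{l:conditions-ac}. The one step you handle genuinely differently is the proof that $G_{\alpha\beta}$ is not $C^{1+ac}$ for $\alpha\neq\beta$: the paper uses the cocycle identity $\frac{D^2H}{DH}=\frac{D^2h_\alpha}{Dh_\alpha}-\frac{D^2h_\beta}{Dh_\beta}\circ H\cdot DH$ together with the \emph{precise} asymptotic $\int_\varepsilon^\delta|Ds|\sim\frac{2}{\pi}\log(\log(|\log\varepsilon|))$ of Lemma \ref{l:estimate-D2D}, so that the two contributions, of respective orders $\alpha$ and $\beta$ times that quantity, cannot cancel; you instead expand the composition in the $u$-coordinate and isolate the exact leading term $(\beta-\alpha)\varphi'$, which needs only the qualitative fact $\varphi'\notin L^1$ (immediate after the substitution $t=\log u$) plus integrability of the correction $E$. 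Your route avoids the variation computation with the zeros of $\tan a=a$, at the cost of an explicit control of the composition error.

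Two points in your version require care to be complete. First, the dictionary as you state it is an equivalence only after checking the quadratic terms: the exact criterion is $e^{\varphi}(\varphi'-(\varphi')^2-\varphi'')\in L^1$, which reduces to ``$\varphi'\in L^1$'' only because $(\varphi')^2$ and $\varphi''$ are separately integrable for your $\varphi$ (true, since $\varphi'=O(1/(u\log u))$), and the analogous caveat applies to the criterion for $X_\varphi$. Second, to conclude that $G_{\alpha\beta}=x\,e^{\Phi(u)}$ is not $C^{1+ac}$ you need not only $E'\in L^1$ but also $E''\in L^1$: the quantity that must fail to be integrable is $\Phi''-\Phi'+(\Phi')^2$, and an uncontrolled $\Phi''$ could in principle interfere with the non-integrable part of $\Phi'$. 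Both estimates do hold --- writing $E(u)=\alpha\bigl(\varphi(\psi_\alpha^{-1}(u))-\varphi(u)\bigr)$ with $\psi_\alpha(u)=u-\alpha\varphi(u)$ gives $E'=O((\varphi')^2)+O(\varphi\,\varphi'')$ and similarly one order higher for $E''$, all integrable here --- but they are exactly the verification you flag as the main obstacle, and they must be written out for the argument to close.
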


The results above arose in work of the authors on the path-connectedness of the space of $\mathbb{Z}^d$-actions by $C^{1+ac}$ diffeomorphisms on 1-manifolds. These results show that actions on the interval arising from flows of hyperbolic vector fields cannot be locally managed via a change of coordinates transforming the flow into the linear one. New key ideas are hence needed to deal with this case, and these are introduced in \cite{EN} in a very general framework (that includes the non-hyperbolic setting).


\section{Some general facts on flows and vector fields}

To avoid certain complications on the domains of maps and vector fields, a large part of our discussion will be about the closed half-line $\Rplus := [0,\infty)$. The reader will have no problem in adapting the arguments and results to the setting of germs at the origin (just by extending/modifying maps and vector fields smoothly outside a neighborhood of the origin), so that they fit with those previously stated. 

\vspace{0.1cm}

A flow on $\Rplus$ is a map from $\mathbb{R}\times \Rplus $ to $\Rplus$, 
$$(t,x) \mapsto f^t(x) := f(t,x),$$ 
that satisfies the flow relation
$$f^{t+s} (x) = f^t ( f^s (x))$$ 
for all $x \in \Rplus$ and all $t \in \R$. It is said to be a $C^1$ flow if it is of class $C^1$ on $(t,x)$. In this case, it has an associated (generating) vector field: 
\begin{equation}\label{eq:ODE-gen}
X (x) := \frac{d}{dt} f^t(x) \big|_{t=0}.
\end{equation}
Note that 
$$X (f^t(x)) = \frac{d}{ds} f^s ((f^t) (x)) \Big|_{s=0} =  \frac{d}{ds} f^t ((f^s) (x)) \Big|_{s=0} = Df^t (x) \cdot \frac{d}{ds} f^s (x) \big|_{s=0},$$
hence the vector field is invariant under the action of the flow:
\begin{equation}\label{inv-flow}
X(f^t (x)) = Df^t (x)\cdot X(x). 
\end{equation}

A flow $(f^t)_t$ on $\Rplus$ is said to be {\em (topologically) contracting} if $f^t (x)$ converges to the origin for all $x$ as $t \to +\infty$. This is equivalent to that the {\em time-1 map} $f := f^1$ is contracting, that is, satisfies $f(x) < x$ for all $x \in  \mathbb{R}_+^* := (0,\infty)$. For $C^1$ flows, this is also equivalent to that the associated vector field satisfies $X (x) < 0$ for all $x > 0$. In this case, for all $x > 0$ and $t \in \R$, one has 
\begin{equation}\label{int-flow}
\int_x^{f^t(x)} \frac{ds}{X(s)} = t.
\end{equation}
Indeed, this directly follows by taking derivatives on both sides of this equality and noting that
$$\frac{d}{dt} \int_x^{f^t(x)} \frac{dy}{X(y)} = \frac{df^t(x)}{dt} \cdot \frac{1}{X (f^t(x))} = 1.$$

We define the map $\tau_X : \R_+^* \to \R$ by 
\begin{equation}\label{def:tau-X}
\tau_X (x) := \int_1^x\frac{dy}{X(y)}.
\end{equation}
This map $\tau_X$ is a global (orientation-reversing) $C^1$ diffeomorphism from $\R_+^*$ to $\R$. By (\ref{int-flow}), it satisfies $\tau_X (f^t(1)) = t$ for all $t \in \R$. More generally, for all $x \in \R_+^*$ and $t \in \R$,
$$\tau_X (f^t (x)) = \int_1^{f^t(x)} \frac{1}{X} = \int_x^{f^t(x)} \frac{1}{X} + \int_1^x \frac{1}{X} = t + \tau_X(x).$$
Therefore,
\begin{equation}\label{tau-conj}
\tau_X \circ f^t = T_t \circ  \tau_X
\end{equation}
where $T_t$ denotes the translation by $t$ on the line.

\vspace{0.2cm}

We close this section with a useful lemma on the behavior of contracting vector fields near the origin.

\begin{lem} \label{l:form-vf}
Let $(f^t)_t$ be a contracting $C^1$ flow on $\R_+$ with associated vector field $X$. If $f^1$ is hyperbolic 
at $0$, then there exists $\lambda < 0$ such that, close to the origin, $X (x) \sim \lambda \, x$. 
Moreover, this value $\lambda$ satisfies $Df (0) = e^{\lambda}$.
\end{lem}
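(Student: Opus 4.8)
The plan is to first pin down the candidate multiplier from the time-1 map, and then to recover the pointwise asymptotics of $X$ from an averaged version of it along orbits. Since each $f^t$ is an orientation-preserving diffeomorphism of $\Rplus$ onto itself, it fixes the boundary point, so $f^t(0)=0$ for all $t$ and hence $X(0)=0$ by (\ref{eq:ODE-gen}); moreover $X(x)<0$ for $x>0$ by contractivity, so $\psi(x):=X(x)/x$ is a well-defined, continuous, negative function on $\R_+^*$. Differentiating the flow relation $f^{t+s}=f^t\circ f^s$ at the fixed point $0$ gives $Df^{t+s}(0)=Df^t(0)\,Df^s(0)$, so $t\mapsto Df^t(0)$ is a one-parameter multiplicative group; it is continuous (the flow is $C^1$) and positive (each $f^t$ is an orientation-preserving diffeomorphism), hence equals $e^{\lambda t}$ for $\lambda:=\log Df(0)$. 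Contractivity together with hyperbolicity forces $Df(0)\in(0,1)$, so $\lambda<0$, and the identity $Df(0)=e^{\lambda}$ is exactly the ``moreover'' assertion.

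It remains to show $\psi(x)\to\lambda$ as $x\to0^+$. First I would record the averaging identity obtained by integrating $\frac{d}{ds}\log f^s(x)=X(f^s(x))/f^s(x)=\psi(f^s(x))$, namely
\[
\log\frac{f^t(x)}{x}=\int_0^t\psi(f^s(x))\,ds.
\]
Taking $t=1$, the left-hand side tends to $\log Df(0)=\lambda$ as $x\to0^+$, since $f$ is $C^1$ with $f(0)=0$. On the other hand, the invariance relation (\ref{inv-flow}) gives $X(f^s(x))=Df^s(x)\,X(x)$, whence $\psi(f^s(x))=\frac{Df^s(x)}{f^s(x)/x}\,\psi(x)$, and therefore
\[
\lambda+o(1)=\int_0^1\psi(f^s(x))\,ds=\psi(x)\int_0^1\frac{Df^s(x)}{f^s(x)/x}\,ds.
\]

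The main obstacle is that the previous line only controls an average of $\psi$ along the orbit, so I must show that the integral factor converges to $1$ in order to extract the pointwise limit. The key is uniformity in $s\in[0,1]$: since $(s,x)\mapsto Df^s(x)$ is continuous on the compact set $[0,1]\times[0,1]$ and equals $e^{\lambda s}$ at $x=0$ by the first step, it converges to $e^{\lambda s}$ uniformly in $s$ as $x\to0^+$; and applying the mean value theorem to $f^s(x)/x=(f^s(x)-f^s(0))/x$ reduces the convergence $f^s(x)/x\to e^{\lambda s}$ to the same uniform statement. Consequently the integrand $Df^s(x)/(f^s(x)/x)$ tends to $1$ uniformly on $[0,1]$ (its denominator staying bounded away from $0$), so the integral factor tends to $1$; dividing then yields $\psi(x)\to\lambda$, that is, $X(x)\sim\lambda\,x$ near the origin.
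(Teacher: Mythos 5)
Your proof is correct and follows essentially the same route as the paper: identify $\lambda$ via the multiplicative group $Df^t(0)=e^{\lambda t}$, then use the invariance relation (\ref{inv-flow}) to express an orbit average over $s\in[0,1]$ in terms of $Df^s(x)$ and pass to the limit using joint continuity of $(s,x)\mapsto Df^s(x)$ at $x=0$. The only cosmetic difference is that you integrate $\frac{d}{ds}\log f^s(x)$ rather than $\frac{d}{ds}f^s(x)$, comparing $X(x)/x$ directly to $\log(f(x)/x)$ instead of comparing $X(x)$ to $f(x)-x$ as in (\ref{e:equiv}).
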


\begin{proof} Note that this result is not entirely straightforward because, with our definition of a $C^1$ flow, the generating vector field may be only continuous. The lemma claims that, however, it is differentiable at $0$.

Taking derivatives at the origin in the flow relation $f^{s+t} = f^s \circ f^t$, we obtain 
$$Df^{s+t} (0) = Df^s (0) \cdot Df^t (0).$$
This implies that $Df^s (0)$ must be of the form $e^{\lambda  s}$ for a certain constant $\lambda$. Evaluation at $s=1$ yields $e^{\lambda} = Df (0)$, 
and $\lambda$ must be negative since the flow is assumed to be contracting. Now we compute using (\ref{inv-flow}):
$$ \frac{f(x)-x}{X(x)} 
\,\,\, = \,\,\, \frac{1}{X(x)} \int_0^1 \frac{d}{ds} f^s (x) \, ds 
\,\,\, = \,\,\,  \int_0^1\frac{X(f^s(x))}{X(x)}ds 
\,\,\, = \,\,\,  \int_0^1Df^s(x) \, ds ,$$
hence
\begin{equation} 
\frac{f(x)-x}{X(x)} \,\,\,
\xrightarrow[x \to 0]{} \,\,\, 
\int_0^1Df^s(0) \, ds \,\,\, = \,\,\, \int_0^1 e^{\lambda s} ds \,\,\, = \,\,\,  \frac{ e^{\lambda} - 1}{ \lambda }.
\label{e:equiv}
\end{equation}
Using this and the fact that $f(x) - x \sim x \, (e^{\lambda} - 1)$ for small values of $x$, one easily concludes the proof. 
\end{proof}


\section{Some useful remarks on conjugacies} 

Recall that given homeomorphisms $f,g$ of spaces $M_1,M_2$ respectively, a homeomorphism 
$h \!: M_1 \to M_2$ is said to {\em conjugate} $f$ to $g$ if \, $hfh^{-1} = g$. 

\begin{ex} \label{ex-conj}
Equality (\ref{tau-conj}) says that the map $\tau_X: \R_+^* \to \R$ conjugates the restriction to $\R_+^*$ of a contracting flow $(f^t)_t$ of generating vector field $X$ to the flow of translations on the line.
\end{ex}

The next proposition is crucial; it exhibits a rigidity behavior for conjugacies of diffeomorphisms that arise in a flow. 

\vspace{0.2cm}

\begin{prop} \label{l:conj-time-1}
Let $f,g$ be the time-$1$ maps of two $C^1$ contracting flows $(f^t)_t$ and $(g^t)_t$ of generating vector fields $X$ and $Y$, respectively. Suppose that $h \in\Diff^1(\R_+)$ conjugates $f$ to~$g$. Then it conjugates the flows, that is, $hf^t h^{-1} = g^t$ for all $t \in \mathbb{R}$. Equivalently, it sends $X$ to $Y$, that is $h_*X=Y$, which means that 
\begin{equation}\label{ODE}
Dh=\frac{Y\circ h}{X} \quad \mbox{for all} \quad x \in \R_+^*.
\end{equation}
\end{prop}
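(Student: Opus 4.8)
The plan is to dispose first of the (standard) equivalence and then to concentrate on the genuinely new implication, namely that conjugating the single map $f = f^1$ already forces the infinitesimal relation (\ref{ODE}). Since $h \in \Diff^1(\R_+)$ maps the boundary point to itself, $h(0) = 0$, and $h$ restricts to a $C^1$ diffeomorphism of $\R_+^*$. For the equivalence, differentiating $h \circ f^t = g^t \circ h$ in $t$ at $t = 0$ and using (\ref{eq:ODE-gen}) gives $Dh \cdot X = Y \circ h$, which is (\ref{ODE}); conversely, (\ref{ODE}) reads $(\tau_Y \circ h)' = \tau_X'$ on $\R_+^*$, so $\tau_Y \circ h = \tau_X + c$ for some constant $c$, and then (\ref{tau-conj}) (see also Example \ref{ex-conj}) yields $h \circ f^t = \tau_Y^{-1} \circ T_c \circ \tau_X \circ f^t = \tau_Y^{-1} \circ T_t \circ T_c \circ \tau_X = g^t \circ h$ for every $t$. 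Thus everything reduces to proving (\ref{ODE}).

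To prove (\ref{ODE}) I would introduce the ``defect'' function $\rho(x) := \frac{Dh(x)\, X(x)}{Y(h(x))}$, which is continuous and positive on $\R_+^*$, and aim to show $\rho \equiv 1$. The first step is to check that $\rho$ is invariant under $f$, i.e. $\rho \circ f = \rho$: differentiating the conjugacy relation $h \circ f = g \circ h$ gives $Dh(f(x))\, Df(x) = Dg(h(x))\, Dh(x)$, while the flow-invariance (\ref{inv-flow}) applied at $t=1$ to $X$ and to $Y$ gives $X(f(x)) = Df(x)\, X(x)$ and $Y(g(h(x))) = Dg(h(x))\, Y(h(x))$. Substituting these into $\rho(f(x))$ and using $h(f(x)) = g(h(x))$, all the extra factors cancel and one is left with $\rho(f(x)) = \rho(x)$.

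Since $f$ is contracting, $f^n(x) \to 0$ for every $x > 0$, so the relation $\rho(x) = \rho(f^n(x))$ forces $\rho(x) = \lim_{u \to 0^+} \rho(u)$ as soon as this limit exists; hence it suffices to compute $\lim_{x \to 0^+}\rho(x)$ and show it equals $1$. For this I would use the computation behind Lemma \ref{l:form-vf}: equation (\ref{e:equiv}) gives $\frac{f(x)-x}{X(x)} \to \frac{e^{\lambda}-1}{\lambda}$, and the analogous statement for $g, Y$ attaches its constant to $\mu := \log Dg(0)$. Crucially, differentiating $g = h f h^{-1}$ at the fixed point $0$ (where $h(0)=0$) gives $Dg(0) = Df(0)$, hence $\mu = \lambda$ and the two constants coincide. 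Writing $\rho$ as the telescoping product
\begin{equation*}
\rho(x) = Dh(x)\cdot \frac{X(x)}{f(x)-x}\cdot \frac{f(x)-x}{g(h(x)) - h(x)}\cdot \frac{g(h(x)) - h(x)}{Y(h(x))},
\end{equation*}
and replacing $g(h(x)) - h(x)$ by $h(f(x)) - h(x) = Dh(\xi_x)\,(f(x)-x)$ through the mean value theorem (with $\xi_x \to 0$), the four factors tend respectively to $Dh(0)$, $\frac{\lambda}{e^{\lambda}-1}$, $\frac{1}{Dh(0)}$ and $\frac{e^{\lambda}-1}{\lambda}$, whose product is $1$. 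This gives $\rho \equiv 1$, that is (\ref{ODE}).

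I expect the main obstacle to be precisely this last limit computation, i.e. showing that $\rho$ extends continuously to the fixed point with value exactly $1$. This is where the hypothesis $h \in \Diff^1(\R_+)$ — differentiable, with non-vanishing derivative, up to and including $0$ — is indispensable: in general the time-$1$ map does \emph{not} determine the flow, since post-composing $\tau_X$ with any $C^1$ diffeomorphism commuting with $T_1$ produces another contracting flow on $\R_+^*$ with the same time-$1$ map. It is only the first-order matching of $h$ at the origin (equal multipliers $Df(0)=Dg(0)$, together with the mean value estimate controlling $h(f(x))-h(x)$) that pins the $f$-invariant function $\rho$ down to the constant $1$ rather than to some other constant.
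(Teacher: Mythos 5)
Your proof is correct, but it takes a genuinely different route from the paper's. The paper deduces the proposition from a uniqueness statement for $C^1$ flows with a prescribed time-$1$ map (Lemma \ref{l:unique-X}), itself resting on the Kopell-type Lemma \ref{l:kopell-gen} proved by a distortion-control argument: one observes that $(h^{-1}g^th)_t$ is a $C^1$ flow with time-$1$ map $f$ and invokes uniqueness to conclude $h^{-1}g^th=f^t$. You instead work directly with the invariant function $\rho=Dh\cdot X/(Y\circ h)$: its $f$-invariance is a clean cocycle computation from the chain rule and \eqref{inv-flow}, and the limit $\rho\to 1$ at the origin is precisely where the differentiability of $h$ up to $0$ and the identity $Dg(0)=Df(0)$ enter; your passage from \eqref{ODE} back to the conjugacy of the flows via $\tau_Y\circ h=\tau_X+c$ essentially coincides with the paper's argument for the converse implication. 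Your route is more self-contained and makes the role of the hypothesis at the origin completely explicit; it also works uniformly in the parabolic case provided one reads the constant $\frac{e^\lambda-1}{\lambda}$ as $\int_0^1 Df^s(0)\,ds$ (equal to $1$ when $\lambda=0$), since only the positivity and the equality of the two constants matter for the cancellation in the telescoping product. What it does not give you are the stronger structural facts the paper extracts along the way --- triviality of centralizer elements with an interior fixed point and uniqueness of the generating flow --- which are used independently elsewhere (Remarks \ref{rem-no-flow} and \ref{rem-no-flow-dos}), so within the paper's architecture the detour through Lemmas \ref{l:kopell-gen} and \ref{l:unique-X} pays for itself.
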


\vspace{0.3cm}

For the proof of this proposition, a key role will be played by the next lemma, which can be considered as an extension to $C^1$ diffeomorphisms that arise as time-1 maps of $C^1$ contracting flows of Kopell's famous Lemma. Namely, Kopell proved the statement below for contracting $C^2$ diffeomorphisms, and these are known to belong to a $C^1$ flow by a classical theorem of Szekeres. (Both Kopell's lemma and Szekeres' theorem extend to $C^{1+ac}$ diffeomorphisms, and even to diffeomorphisms with derivative of bounded variation; see \cite{book} and \cite{EN}, respectively. 
However, they do not hold for $C^{1+\alpha}$ diffeomorphisms; see Remark \ref{rem-no-flow} below.) 
We closely follow an argument from \cite{book}.

\vspace{0.3cm}

\begin{lem} \label{l:kopell-gen}
 Let $(f^t)_t$ be a contracting $C^1$ flow with generating vector field $X$. If a $C^1$ diffeomorphism of $\R_+$ commutes with  $f^1$ and has a fixed point in $\R_+^*$, then it equals the identity.
\end{lem}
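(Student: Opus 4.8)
The plan is to transport the problem into the linear coordinate provided by $\tau_X$, where the flow becomes the translation flow and the desired rigidity becomes transparent. Let $g\in\Diff^1(\Rplus)$ commute with $f:=f^1$ and fix some $a>0$; since $g$ is a diffeomorphism of the half-line it fixes the boundary, $g(0)=0$, so it restricts to a $C^1$ diffeomorphism of $\R_+^*$ with $Dg(0)>0$. Setting $\tilde g:=\tau_X\circ g\circ\tau_X^{-1}\in\Diff^1(\R)$, relation (\ref{tau-conj}) turns the commutation $gf=fg$ into $\tilde g\circ T_1=T_1\circ\tilde g$, that is $\tilde g(z+1)=\tilde g(z)+1$. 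Hence $\phi:=\tilde g-\id$ is $C^1$ and $1$-periodic, and the fixed point $a$ becomes a zero $\phi(z_0)=0$ with $z_0:=\tau_X(a)$. Everything thus reduces to proving $\phi\equiv0$.

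First I would record the structural identity $g(x)=f^{s(x)}(x)$ with $s(x):=\phi(\tau_X(x))$, obtained by applying $\tau_X^{-1}$ to $\tilde g(\tau_X(x))=\tau_X(x)+\phi(\tau_X(x))$ and using (\ref{tau-conj}); crucially $s$ is bounded, $|s(x)|\le\|\phi\|_\infty=:S$, because $\phi$ is periodic. A direct chain-rule computation (using $D\tau_X=1/X$) gives $D\tilde g(z)=Dg(x)\,\frac{X(x)}{X(g(x))}$ for $x=\tau_X^{-1}(z)$, while the flow invariance (\ref{inv-flow}) evaluated at time $s(x)$ yields $\frac{X(g(x))}{X(x)}=Df^{s(x)}(x)$. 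The heart of the argument is to show that $D\tilde g(z)\to1$ as $z\to+\infty$, i.e. as $x\to0^+$. Since $g$ is differentiable at $0$ with $g(0)=0$, one has $\frac{g(x)}{x}\to Dg(0)$ and $Dg(x)\to Dg(0)$; I would then compare $Df^{s(x)}(x)=\frac{X(g(x))}{X(x)}$ with $\frac{f^{s(x)}(x)}{x}=\frac{g(x)}{x}$ and show that their difference tends to $0$, so that $\frac{X(g(x))}{X(x)}\to Dg(0)$ as well, whence $D\tilde g(z)\to Dg(0)/Dg(0)=1$.

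Finally, since $D\tilde g=1+\phi'$ with $\phi'$ continuous and $1$-periodic, the existence of the limit $\lim_{z\to+\infty}D\tilde g(z)=1$ forces $\phi'\equiv0$ (a continuous periodic function admitting a limit at infinity is constant, equal to that limit, and here the limit of $\phi'$ is $0$). Thus $\phi$ is constant, and $\phi(z_0)=0$ gives $\phi\equiv0$, i.e. $\tilde g=\id$ and $g=\id$.

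The main obstacle is precisely the comparison step, namely establishing that $Df^{s}(x)$ and $f^{s}(x)/x$ become uniformly close, and both tend to $Dg(0)$, as $x\to0^+$. This is where the hypotheses must be handled with care: the generating field $X$ need only be continuous and is not assumed hyperbolic, so I cannot invoke $X(x)\sim\lambda x$ from Lemma \ref{l:form-vf} nor differentiate $X$ at the origin. Instead I would exploit that the flow map $(s,x)\mapsto f^s(x)$ is jointly $C^1$, so that $(s,x)\mapsto Df^s(x)$ is continuous, hence uniformly continuous on the compact set $[-S,S]\times[0,x_0]$; both $Df^s(x)$ and $\frac{f^s(x)}{x}=\frac{f^s(x)-f^s(0)}{x}$ then converge to $Df^s(0)$ uniformly for $s\in[-S,S]$ as $x\to0^+$, and evaluating at the admissible value $s=s(x)\in[-S,S]$ closes the gap. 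This uniform-continuity input is exactly what replaces the $C^2$ bounded-distortion estimate in the classical proof of Kopell's lemma. One must also be careful not to appeal to Proposition \ref{l:conj-time-1} (which would give $g_*X=X$, hence $D\tilde g\equiv1$, at once), since that proposition is itself proved using the present lemma.
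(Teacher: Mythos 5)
Your proof is correct, but it takes a genuinely different route from the paper's. The paper runs the classical Kopell distortion argument: it restricts to a fundamental domain $I=[f(x_0),x_0]$, uses the relation $\tilde f^k=f^{-n}\tilde f^k f^n$ together with the flow to rewrite and bound the distortion ratio $Df^n(y)/Df^n(\tilde f^k(y))=Df^{s_k}(y)/Df^{s_k}(f^n(y))$ by a constant independent of $n$ and $k$, and combines this with $D\tilde f(0)=1$ (coming from the accumulating fixed points $f^n(x_0)$) to conclude that $D\tilde f^k$ is uniformly bounded on $I$, whence $\tilde f=\mathrm{id}$. You instead conjugate by $\tau_X$ to the translation flow, observe that $\tilde g-\mathrm{id}$ is $C^1$ and $1$-periodic, and show that $D\tilde g\to 1$ at $+\infty$ using only the differentiability of $g$ at the origin (with $Dg(0)>0$) and the uniform continuity of $(s,x)\mapsto Df^s(x)$ on the compact set $[-S,S]\times[0,x_0]$; periodicity then forces $D\tilde g\equiv 1$. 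All the ingredients you invoke --- \eqref{tau-conj}, \eqref{inv-flow}, the mean value theorem step giving $f^s(x)/x\to Df^s(0)$ uniformly in $s$ --- check out, and your argument never iterates $g$. It in fact proves more than the statement: it shows directly that any $C^1$ diffeomorphism of $\R_+$ commuting with $f^1$ equals some $f^c$ (the first assertion of Lemma \ref{l:unique-X}), the fixed point serving only to pin down $c=0$. What the paper's route buys in exchange is robustness: the distortion scheme is the one that survives when no flow, hence no linearizing coordinate $\tau_X$, is available, which is the setting of Kopell's original lemma for $C^2$ (or $C^{1+bv}$) contractions. Your care in avoiding Proposition \ref{l:conj-time-1}, which is proved using the present lemma, is exactly right.
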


\begin{proof} Assume $\tilde{f}$ commutes with $f := f^1$ and has a fixed point $x_0\in\R_+^*$. By commutativity, each point of the sequence $(f^n(x_0))$ is fixed by $\tilde{f}$. Note that this sequence converges to the origin. Therefore, $D \tilde{f}(0) = 1$. Denote $I := [f(x_0),x_0]$. The relation $\tilde{f}^k = f^{-n} \tilde{f}^k f^n$ gives, for each $k,n$ in $\mathbb{N}$ and $y \in I$, 
$$D\tilde{f}^k (y) = \frac{Df^n (y)}{Df^n (\tilde{f}^k(y))} \cdot D\tilde{f}^k (f^n (y)).$$
Denoting $y_k := \tilde{f}^k (y)$ and letting $s_k \in [-1,1]$ be such that $f^{s_k} (y) = y_k$, this gives 
$$D\tilde{f}^k (y) = \frac{Df^n (y)}{Df^n (f^{s_k}(y))} \cdot D\tilde{f}^k (f^n (y)) = 
\frac{Df^{s_k} (y)}{Df^{s_k}(f^n(y))}  \cdot D\tilde{f}^k (f^n (y)).$$
Since $D \tilde{f}(0) = 1$, the expression $D\tilde{f}^k (f^n (y))$ converges to $1$ as $n$ goes to infinity. Moreover, 
$$\frac{Df^{s_k} (y)}{Df^{s_k}(f^n(y))} \leq C:= 
\frac{\max \{ Df^s (\bar{y}): s \in [-1,1], \bar{y} \in [0,x_0]\}}{\min \{Df^s (\bar{y}): s \in [-1,1], \bar{y} \in [0,x_0]\}}.$$
We thus conclude that $D\tilde{f}^k$ is uniformly bounded (by $C$) on $I$ independently of $k$. This easily implies that $\tilde{f}$ acts as the identity on $I$. By commutativity, it is the identity on the whole $\R_+^*$. 
\end{proof}

The next lemma is a standard consequence of the previous one.

\begin{lem} \label{l:unique-X}
 Let $(f^t)_t$ be a contracting $C^1$ flow. If a $C^1$ diffeomorphism of $\R_+$  commutes with $ f^1$, then it belongs to the flow. In particular, $(f^t)_t$ is the unique $C^1$ flow whose time-1 map coincides with $f$.
\end{lem}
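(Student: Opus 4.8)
The plan is to deduce the statement from Lemma~\ref{l:kopell-gen} by subtracting off a suitable time of the flow so that the commuting diffeomorphism acquires an interior fixed point. First I would fix a basepoint $x_0 \in \R_+^*$ and understand the orbit $t \mapsto f^t(x_0)$. Using the diffeomorphism $\tau_X$ from \eqref{def:tau-X}, which maps $\R_+^*$ onto all of $\R$, together with the conjugacy relation \eqref{tau-conj}, one gets $f^t(x_0) = \tau_X^{-1}\bigl(t + \tau_X(x_0)\bigr)$; hence, as $t$ ranges over $\R$, the point $f^t(x_0)$ ranges \emph{bijectively} over $\R_+^*$ (this is just a reformulation of Example~\ref{ex-conj}). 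Since a $C^1$ diffeomorphism $g$ of $\R_+$ necessarily fixes the boundary point $0$ and carries $\R_+^*$ onto itself, the value $g(x_0)$ lies in $\R_+^*$, so there is a unique $t_0 \in \R$ with $f^{t_0}(x_0) = g(x_0)$.

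Next I would set $\tilde f := f^{-t_0} \circ g$. This is a $C^1$ diffeomorphism of $\R_+$ which commutes with $f = f^1$ (both $g$ and $f^{-t_0}$ do), and which fixes $x_0 \in \R_+^*$ by construction. Lemma~\ref{l:kopell-gen} then forces $\tilde f = \id$, that is $g = f^{t_0}$, proving that $g$ belongs to the flow. This is the heart of the argument; the only genuinely delicate ingredient is Lemma~\ref{l:kopell-gen} itself (which I am free to assume), while the surjectivity of the orbit and the reduction to an interior fixed point are essentially bookkeeping.

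For the ``in particular'' clause, suppose $(g^t)_t$ is a second $C^1$ flow with $g^1 = f^1$. For each $t$, the map $g^t$ commutes with $g^1 = f^1$, so the first part yields $g^t = f^{s(t)}$ for a unique $s(t) \in \R$. Reading \eqref{tau-conj} off at the basepoint gives $s(t) = \tau_X(g^t(x_0)) - \tau_X(x_0)$, so $s$ is continuous (since $(g^t)_t$ is a $C^1$ flow and $\tau_X$ is continuous); the flow relation together with the injectivity of $t \mapsto f^t$ makes $s$ additive, and $s(1) = 1$. A continuous additive function on $\R$ is linear, whence $s(t) = t$ and $g^t = f^t$ for every $t$. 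The step I expect to require the most care is precisely this last one, namely upgrading the pointwise identity $g^t = f^{s(t)}$ to the equality $s(t) = t$ for all $t$, which is where the continuity of $s$ (and hence the hypothesis that $(g^t)_t$ is a genuine $C^1$ flow) is used.
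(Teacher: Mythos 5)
Your proposal is correct and follows essentially the same route as the paper: reduce to Lemma~\ref{l:kopell-gen} by composing with $f^{-t_0}$ to create an interior fixed point, then, for the uniqueness of the flow, observe that $t\mapsto s(t)$ is a continuous additive map sending $1$ to $1$. The only difference is that you spell out two details the paper leaves implicit (the existence of $t_0$ via the surjectivity of the orbit map, and the continuity of $s$ via $\tau_X$), which is fine.
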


\begin{proof}
Let $\tilde{f}$ be any element of $\mathrm{Diff}^1 (\R_+)$ that commutes with $f$. Fix $x_0 > 0$, and let $t_0 \in \R$ be such that $f^{t_0} (x_0) = \tilde{f} (x_0)$. Then $ f^{-t_0} \tilde{f}$ fixes the point $x_0$ and commutes with $f$. By the previous lemma, it equals the identity, hence $\tilde{f} = f^{t_0}$. 

To show the second statement, let $(\hat{f}^t)_t$ be a $C^1$ flow for which $\hat{f}^1 = f^1$. By the first part, for each $t \in \R$, there must exist $t' \in \R$ such that $\hat{f}^t = f^{t'}$. The correspondence $t \mapsto t'$ is easily seen to be a continuous group homomorphism of $(\R,+)$ that sends $1$ into $1$. It must hence equal the identity, and therefore $\hat{f}^t = f^t$ for all $t \in \R$. 
\end{proof}

\begin{rem}\label{rem-no-flow}
For each $\alpha < 1$, Tsuboi has built in \cite{tsuboi} examples of $C^{1+\alpha}$ contractions of $\R_+$ whose centralizers contain 
nontrivial elements with infinitely many fixed points. According to  Lemma  \ref{l:kopell-gen}, these contractions do not embed into a $C^1$ 
flow. Another (more elaborate) source of such examples comes from \cite{KN} and works as follows: Fix $0 < \beta < 1 - \alpha$  and build 
two commuting contractions $f, g$ of $\R_+$ of regularities $C^{1+\alpha}$ and $C^{1+\beta}$, respectively, which generate a 
rank-2 Abelian group acting freely but non-minimally on $\R_+^*$. Then $f$ does not embed into a $C^1$ flow. Otherwise, by Lemma 
\ref{l:unique-X} above, $g$ would belong to such flow, and this would imply that the action of the group generated by $f$ and $g$ on 
$\R_+^*$ is minimal, which is supposed not to be the case.
\end{rem}

\begin{rem}\label{rem-no-flow-dos}
More interestingly, there exist examples of hyperbolic $C^1$ contractions that do not embed into a flow. These arise from the work of Farinelli, who 
built $C^1$ hyperbolic contractions that admit nontrivial elements in their centralizer with infinitely many fixed points (see \cite[Theorem 1.7]{farinelli} 
and its proof). Again, by Lemma  \ref{l:kopell-gen}, such a contraction cannot embed into a flow. Note that, unlike the parabolic case 
discussed above, these contractions cannot be of class $C^{1+\alpha}$ because of Sternberg's linearization theorem. 
\end{rem}

\begin{proof}[Proof of Proposition \ref{l:conj-time-1}] 
Let $h$ be a $C^1$ diffeomorphism of $\R_+$ that conjugates $f$ to $g$. Then $(h^{-1} g^t h)_t$ is a $C^1$ flow 
whose time-1 map coincides with $h^{-1} g h = f$. By Lemma \ref{l:unique-X}, we have $h^{-1} g^t h = f^t$ for all $t \in \R$, hence $h$ conjugates the flows. 

Now, if $g^t h (x) = h f^t (x)$ for all $t \in \mathbb{R}$, then taking derivatives at $t = 0$ we obtain
$$Y(h(x)) = \frac{d}{dt} g^t (h(x)) = \frac{d}{dt} h f^t (x) = Dh(x) \frac{d}{dt} f^t (x) = D h(x) \cdot X (x),$$ 
which shows (\ref{ODE}). 

Conversely, if this relation holds, then
$$\frac{d}{dt} (h^{-1} g^t h)(x) 
= Dh^{-1} (g^t h(x)) \, \frac{d}{dt} g^t (h(x))
= \frac{Y (g^t h(x))}{Dh (h^{-1}g^th(x))} = X(h^{-1}g^th(x)).$$
Thus, the flow $(h^{-1} g^t h)_t$ satisfies the same ODE as (\ref{eq:ODE-gen}). If $X$ were of class $C^1$ (or at least Lipschitz), 
the classical uniqueness theorem for the Cauchy problem would conclude the proof. However, $X$ is only continuous, 
and a specific argument is needed. This strongly uses the 1-dimensional setting and the fact that the flows 
involved are contracting, and proceeds as follows: 
Consider the function $\tau_X$ defined by (\ref{def:tau-X}). If $(\tilde{f}^t)_t$ is any other flow that satisfies the ODE  (\ref{eq:ODE-gen}), 
then for all positive $x$ we have 
$$\frac{d \tau_X (\tilde{f}^t(x))}{dt} =D\tau_X(\tilde{f}^t(x))\cdot \frac{d\tilde{f}^t(x)}{dt}=\frac1{X(\tilde{f}^t(x))}\cdot \frac{d\tilde{f}^t(x)}{dt}=1.$$ 
Thus,  
$$\tau_X (\tilde{f}^t(x))=\tau_X(x)+t.$$
This means that $\tau_X$ conjugates $\tilde{f}^t$ to the translation by $t$, which implies the equality $\tilde f^t=f^t$, as required.

\end{proof}

In view of Proposition \ref{l:conj-time-1}, the study of $C^1$ diffeomorphisms that conjugate the time-1 maps of two contracting vector fields transfers into that of those that conjugate the flows. Very nicely, these conjugating maps can be easily characterized even in the framework of homeomorphisms. 

\begin{lem} 
\label{l:uniq}
Let $(f^t)_t, (g^t)_t$ be two contracting flows with generating vector fields $X,Y$, respectively. Suppose that a homeomorphism $h$ of $\R_+^*$ conjugates these flows. Then $h$ is of the form $\tau_Y^{-1}T_t \tau_X$ for some $t \in \R$. Conversely, any map of this form conjugates the flows.
\end{lem}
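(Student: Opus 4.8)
The plan is to exploit Example \ref{ex-conj}, which tells us that each $\tau_X$ and $\tau_Y$ conjugates the corresponding flow (restricted to $\R_+^*$) to the translation flow $(T_t)_t$ on the line. The strategy is to transport the conjugacy $h$ through these maps and reduce the problem to classifying homeomorphisms of $\R$ commuting with every translation.

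First I would verify the converse direction, which is immediate: given any $t \in \R$, the map $\tau_Y^{-1} T_t \tau_X$ conjugates $(f^s)_s$ to $(g^s)_s$ because $\tau_X \circ f^s = T_s \circ \tau_X$ and $\tau_Y \circ g^s = T_s \circ \tau_Y$ by (\ref{tau-conj}), while $T_t$ commutes with every $T_s$. Composing these three relations gives $(\tau_Y^{-1}T_t\tau_X) \circ f^s = g^s \circ (\tau_Y^{-1}T_t\tau_X)$ for all $s$, which is what is required. Note that $\tau_Y^{-1} T_t \tau_X$ is genuinely a homeomorphism of $\R_+^*$ since $\tau_X \colon \R_+^* \to \R$ and $\tau_Y \colon \R_+^* \to \R$ are (orientation-reversing) $C^1$ diffeomorphisms.

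For the main direction, suppose $h$ conjugates the flows, so $h f^s h^{-1} = g^s$ for all $s$. I would set $\varphi := \tau_Y \circ h \circ \tau_X^{-1}$, a homeomorphism of $\R$, and compute that it commutes with every translation: using $\tau_X^{-1} \circ T_s = f^s \circ \tau_X^{-1}$ and $\tau_Y \circ g^s = T_s \circ \tau_Y$ (both rearrangements of (\ref{tau-conj})), we get
$$
\varphi \circ T_s = \tau_Y \circ h \circ f^s \circ \tau_X^{-1} = \tau_Y \circ g^s \circ h \circ \tau_X^{-1} = T_s \circ \varphi.
$$
Thus $\varphi$ commutes with all translations of $\R$. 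A homeomorphism of $\R$ commuting with $T_s$ for every $s$ must itself be a translation: indeed, writing $t := \varphi(0)$, the relation $\varphi(s) = \varphi(T_s(0)) = T_s(\varphi(0)) = s + t$ holds for every $s \in \R$, so $\varphi = T_t$. Unwinding the definition of $\varphi$ then yields $h = \tau_Y^{-1} \circ T_t \circ \tau_X$, as claimed.

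I do not expect any serious obstacle here; the content is entirely contained in the conjugacy relation (\ref{tau-conj}) established earlier, and the only point requiring a little care is the bookkeeping of which map conjugates what, together with noting that the flows being \emph{contracting} is what guarantees $\tau_X, \tau_Y$ are well-defined global diffeomorphisms onto $\R$ (so that $\varphi$ is defined on all of $\R$). The mildly delicate step is simply recording that commuting with the full one-parameter group of translations forces $\varphi$ to be a single translation, which is the short computation above.
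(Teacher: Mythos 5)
Your proof is correct and follows essentially the same route as the paper: conjugate $h$ by $\tau_X,\tau_Y$ to get a homeomorphism of $\R$ commuting with all translations, observe it must be a translation, and unwind. The only difference is that you spell out the short computation $\varphi(s)=s+\varphi(0)$ which the paper leaves as ``easily implies''.
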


\begin{proof} 
Consider the associated homeomorphisms $\tau_X$ and $\tau_Y$ from $\R_+^*$ to $\R$. For all $t \in \R$, equality $hf^t = g^th$ yields, in virtue of (\ref{tau-conj}) and the analog property for $\tau_Y$ and $g^t$,
\begin{eqnarray*}
(\tau_Y h \tau_X^{-1}) \,T_t 
&=& (\tau_Y h \tau_X^{-1}) ( \tau_X f^t \tau_X^{-1} )
\,\, = \,\, \tau_Y  (h f^t) \tau_X^{-1} \\
&=& \tau_Y  (g^t h) \tau_X^{-1}  
\,\, = \,\, (\tau_Y g^t \tau_Y^{-1}) (\tau_Y h \tau_X^{-1}) 
\,\, = \,\, T_t \, (\tau_Y h \tau_X^{-1}).
\end{eqnarray*}
Therefore, the homeomorphism $\tau_Y h \tau_X^{-1}$ of $\R$ commutes with all the translations. This easily implies that it is itself a translation, say $\tau_Y h \tau_X^{-1} = T_t$. This shows the announced form $h = \tau_Y^{-1} T_t \tau_X$. The fact that any such map conjugates the flows follows from (\ref{tau-conj}) plus the analogous conjugacy relation $\tau_Y^{-1} g^t \tau_Y = T_t$. 
\end{proof}

\begin{rem} 
In order to illustrate how crucial are the hypothesis in Proposition \ref{l:conj-time-1} and Lemma \ref{l:uniq}, let us point out that, given any contracting $C^1$ flow, it is not hard to build a bi-Lipschitz homeomorphism of $\R_+$ that is not $C^1$ and commutes with the time-1 map $f$ while not centralizing the whole flow. Indeed, if we take {\em any} bi-Lipschitz homeomorphism of a fundamental domain $[f(a),a]$, there is a unique way to extend it so that it commutes with $f$, and it is not hard to see that this extension is still bi-Lipschitz. 
\end{rem}

\vspace{0.1cm}

A direct consequence of the lemma above is that any homeomorphism conjugating the flows as in the statement is actually $C^1$ on $\R_+^*$. The failure of regularity will hence be concentrated only at the origin. To better detect it, we will write the map $x \mapsto \tau_Y^{-1} T_t \tau_X (x)$ in the form 
$$x \mapsto g^{s(x)}(x).$$
The value of $s(x) \in \mathbb{R}$ may be computed as follows: the relation $T_t \tau_X (x) = \tau_Y g^{s(x)} (x)$ yields 
$$t + \int_1^x \frac{1}{X} = \int_1^{g^{s(x)}(x)} \frac{1}{Y} = \int_1^x \frac{1}{Y} + \int_x^{g^{s(x)}(x)} \frac{1}{Y} = \int_1^x \frac{1}{Y} + s(x),$$
hence, 
\begin{equation}\label{rel}
t + \int_1^x \left( \frac{1}{X} - \frac{1}{Y} \right) = s(x)
\end{equation}
The next lemma provides a particularly useful way of thinking on this.

\vspace{0.2cm}

\begin{lem} 
\label{r:form} 
Let $f,g$ be the time-$1$ maps of two $C^1$ contracting flows $(f^t)_t$ and $(g^t)_t$ of generating vector fields $X$ and $Y$, respectively. For every $a \in \R_+^*$, the unique homeomorphism that conjugates $(f^t)_t$ to $(g^t)_t$ and fixes $a$ is $x\mapsto g^{\int_a^x(\frac1X-\frac1Y)}(x)$. This is a $C^1$ diffeomorphism of $\R_+^*$ that extends continuously as a map fixing the origin.
\end{lem}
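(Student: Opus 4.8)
The plan is to deduce everything from Lemma~\ref{l:uniq} together with the explicit computation recorded in~(\ref{rel}). By Lemma~\ref{l:uniq}, every homeomorphism of $\R_+^*$ conjugating $(f^t)_t$ to $(g^t)_t$ is of the form $\tau_Y^{-1} T_t \tau_X$ for a unique $t \in \R$, and, as explained just after that lemma, each such map can be written as $x \mapsto g^{s(x)}(x)$ with $s(x) = t + \int_1^x(\frac1X - \frac1Y)$. So the problem reduces to selecting the single value of $t$ for which the corresponding map fixes $a$, reading off the resulting exponent, and then verifying the two regularity claims.

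First I would impose the normalization $h(a) = a$. Since $(g^t)_t$ is contracting, $g^s(a) = a$ forces $s = 0$, so $h(a) = a$ is equivalent to $s(a) = 0$, that is, $t = -\int_1^a(\frac1X - \frac1Y)$. Substituting this value back into the formula for $s(x)$ and using additivity of the integral gives $s(x) = \int_a^x(\frac1X - \frac1Y)$, which is exactly the claimed exponent; hence $h(x) = g^{\int_a^x(\frac1X - \frac1Y)}(x)$. Uniqueness is then immediate: Lemma~\ref{l:uniq} leaves only the one-parameter family indexed by $t$, and the fixing condition pins down $t$ uniquely. For the regularity on $\R_+^*$, I would note that $\tau_X$ is the integral of the continuous, nowhere-vanishing function $1/X$, hence a $C^1$ diffeomorphism of $\R_+^*$ onto $\R$ with derivative $1/X$; the same holds for $\tau_Y$, so $\tau_Y^{-1}$ is $C^1$, and $h = \tau_Y^{-1} T_t \tau_X$ is a composition of $C^1$ maps (this is the remark following Lemma~\ref{l:uniq}).

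The only step that is not purely formal is the continuous extension at the origin, and this is where I expect whatever mild subtlety there is to lie. Here I would use that $\tau_X$ and $\tau_Y$ are \emph{orientation-reversing} diffeomorphisms of $\R_+^*$ onto \emph{all} of $\R$, as already established in the text: consequently $\tau_X(x) \to +\infty$ as $x \to 0^+$, so $\tau_X(x) + t \to +\infty$, and since $\tau_Y^{-1}$ sends $+\infty$ to $0$, we obtain $h(x) = \tau_Y^{-1}(\tau_X(x)+t) \to 0$ as $x \to 0^+$. Setting $h(0) := 0$ therefore yields a continuous extension to $\R_+$ fixing the origin. The main obstacle is thus essentially bookkeeping: making sure the translation parameter selected by the fixing condition matches the exponent in the stated closed form, and observing that the limit at $0$ needs only the surjectivity of $\tau_X$ and $\tau_Y$ onto $\R$, so that neither hyperbolicity nor Lemma~\ref{l:form-vf} is invoked at this stage.
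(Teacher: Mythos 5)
Your proposal is correct and follows essentially the same route as the paper: both reduce to Lemma~\ref{l:uniq} and relation~(\ref{rel}), use the fixing condition $s(a)=0$ to pin down the translation parameter $t$, and recombine the integrals to obtain the stated exponent. Your added verifications (that $g^s(a)=a$ forces $s=0$ for a contracting flow, and that the continuous extension at $0$ follows from $\tau_X,\tau_Y$ being surjective onto $\R$) are exactly the points the paper leaves to the surrounding discussion, and they are handled correctly.
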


\begin{proof} 
According to the previous lemma and the discussion above, we need to show that if $t$ is chosen so that $\tau_Y^{-1} T_t \tau_X (a) = a$, then, for all $x \in \R$, 
$$s(x) = \int_a^x \left( \frac{1}{X} - \frac{1}{Y} \right).$$
Now note that $a = \tau_Y^{-1} T_t \tau_X (a) = g^{s(a)} (a)$ implies $s(a) = 0$. By (\ref{rel}), this gives 
$$t = \int_1^a \left( \frac{1}{Y} - \frac{1}{X} \right).$$
Therefore, by (\ref{rel}) again, 
$$s(x) =  \int_1^a \left( \frac{1}{Y} - \frac{1}{X} \right) +  \int_1^x \left( \frac{1}{X} - \frac{1}{Y} \right) 
= \int_a^x \left( \frac{1}{X} - \frac{1}{Y} \right),$$
as desired.
\end{proof}

\vspace{0.1cm}

The lemma above is the ground path for the three main results of this work.
Roughly, the common mechanism of  proof works as follows: The expression
$$ \int_a^x \left( \frac{1}{Y} - \frac{1}{X} \right)$$
captures how much changes the ``time difference'' between $\tau_X (x) - \tau_X (a)$ and $\tau_Y(x) - \tau_Y (a)$. If this difference becomes unbounded, 
then the point $x$ gets pushed more and more in the time scale. For the original flows, this means that the conjugacy between them pushes with exponential 
force, and this translates into a failure of the Lipschitz property. One thus gets Theorem~A. If the difference above is bounded but oscillates, then the conjugacy 
becomes Lipschitz, but not smooth. This is Theorem~B. Finally, the situation for Theorem~C is more complicated (this is why we cannot provide a full 
statement in this case), but the strategy still begins by properly controlling the difference above.

\vspace{0.2cm}

We close this section with a very well-known lemma. Although it is stated for global diffeomorphisms, a 
version for (non-necessarily contracting) germs of diffeomorphisms also holds, as the reader will easily note.

\vspace{0.3cm}

\begin{lem} 
\label{l:pres-mult}
Let $f,g$ be $C^1$ diffeomorphisms of $\R_+$. If they are conjugated by a bi-Lipschitz homeomorphism, then $Df (0) = Dg (0)$.
\end{lem}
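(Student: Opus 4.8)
The plan is to extract the multiplier dynamically, as a Lyapunov-type exponent read off from the iterates, and to exploit the fact that a bi-Lipschitz map fixing the origin distorts distances to $0$ by only a bounded multiplicative factor. First I would record the elementary consequence of the hypothesis: since $h$ is a homeomorphism of $\Rplus$ it fixes $0$, and bi-Lipschitzness with some constant $L\geq 1$ gives $L^{-1}x\leq h(x)\leq L\,x$ for all $x\geq 0$. Replacing $f$ and $g$ by their inverses if necessary (these are conjugated by the same $h$ and have reciprocal multipliers at the origin), I may assume $Df(0)\leq 1$; when $Df(0)<1$ the relation $f(x)/x\to Df(0)$ forces $f(x)<x$ for small $x>0$, so $f^n(x)\to 0$ for every small $x>0$, and the same holds for $g$ because $h$ is a topological conjugacy.

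The analytic heart is the limit
\begin{equation*}
\frac{1}{n}\log f^n(x)\xrightarrow[n\to\infty]{}\log Df(0),
\end{equation*}
valid for every $x>0$ whose forward orbit tends to $0$. To establish it I would write $\log f^n(x)=\log x+\sum_{k=0}^{n-1}\log\frac{f(f^k(x))}{f^k(x)}$ and use that, since $f\in\Diff^1(\Rplus)$ fixes $0$, the ratio $f(y)/y$ converges to $Df(0)$ as $y\to 0$; as $f^k(x)\to 0$, a Cesàro averaging argument shows the normalized sum tends to $\log Df(0)$, while $\frac{1}{n}\log x\to 0$. The identical statement holds for $g$.

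With these two ingredients the conclusion is immediate. Iterating the conjugacy relation gives $h\circ f^n=g^n\circ h$, hence $h(f^n(x))=g^n(h(x))$, and the bi-Lipschitz bound applied to the left-hand side yields $L^{-1}f^n(x)\leq g^n(h(x))\leq L\,f^n(x)$. Taking logarithms, dividing by $n$, and letting $n\to\infty$, the two outer terms both converge to $\log Df(0)$ by the limit above, while the middle term converges to $\log Dg(0)$ (here $h(x)>0$ is a point whose orbit under $g$ tends to $0$). Squeezing forces $\log Dg(0)=\log Df(0)$, that is $Df(0)=Dg(0)$.

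I do not anticipate a serious obstacle; the only delicate point is the Cesàro step in the Lyapunov limit together with the reduction guaranteeing that the relevant orbits actually converge to $0$ (the expanding case being absorbed by passing to inverses, and the borderline case $Df(0)=1$ being at least as easy, either through the same averaging or, should fixed points of $f$ accumulate at $0$, by noting that $h$ carries them to fixed points of $g$ accumulating at $0$ and reading the multiplier off directly). The conceptual crux is that the additive constants $\pm\log L$ coming from the bi-Lipschitz hypothesis become negligible after dividing by $n$, which is precisely why bi-Lipschitz — and not merely topological — conjugacy suffices to preserve the multiplier.
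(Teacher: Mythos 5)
Your proof is correct and follows essentially the same route as the paper's: both extract $Df(0)$ as an exponential rate along an orbit tending to $0$ and observe that the bi-Lipschitz constant contributes only a bounded error that disappears after dividing by $n$ (equivalently, taking $n$-th roots). The only, harmless, difference is that you Ces\`aro-average the ratios $\log\bigl(f(y)/y\bigr)$ along the orbit, whereas the paper averages $\log Df$ (i.e.\ uses $Df^k(x)^{1/k}\to Df(0)$) and then invokes the mean value theorem.
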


\begin{proof} 
We may assume that $f$ and $g$ have no fixed point on a certain interval $(0,\delta]$ (with $\delta > 0$), otherwise they would both have derivative 1 at the origin. Passing to the inverses if necessary, we may suppose that they contract toward the origin on this interval. Then, for all $x \in [0,\delta] $, one has the convergence $Df^k (x)^{1/k} \to Df (0)$. Indeed, 
$$\log \big[ Df^k (x)^{1/k} \big] = \frac{1}{k} \sum_{i=0}^{k-1} \log Df (f^i (x)),$$
and since $f^i(x) \to 0$, the right-side expression converges to $\, \log Df (0)$. With a little extra effort, one checks that the convergence is uniform on $[0,\delta]$. A similar statement holds for $g$.

Denote by $h$ a bi-Lipschitz conjugacy from $f$ to $g$. Fix $x_0 < \delta$, and consider the equality 
$$\frac{h (f^k (x_0)) - h( 0)}{f^k (x_0) - 0} \cdot  \frac{f^k (x_0) - f^k (0)}{x_0 - 0} 
= \frac{h (f^k (x_0))}{x_0} 
= \frac{g^k (h(x_0))}{x_0}
= \frac{g^k (h(x_0)) - g^k (0)}{x_0 - 0}.$$
The expression on the right side equals $Dg^k (\eta_k)$ for a certain $\eta_k \leq h(x_0)$. Up to the bi-Lipschitz constant $C$ of $h$, the expression on the left side equals $Df^k (\zeta_k)$ for a certain $\zeta_k \leq \delta$. This implies that 
$$Dg^k (\eta_k) / C \leq Df^k (\zeta_k) \leq C \cdot Dg^k (\eta_k).$$
If we take $k^{th}$-roots and pass to the limit in these inequalities, the convergences above yields 
$$Dg (0) \leq  Df (0) \leq D g (0),$$
which closes the proof.
\end{proof}


\section{Non bi-Lipschitz conjugate $C^1$ hyperbolic vector fields} 

We start with a general criterion of bi-Lipschitz conjugacy based on the considerations of the previous sections.

\vspace{0.1cm}

\begin{prop}
\label{crit-conj-bil}
Let $(f^t)_t$ and $(g^t)_t$ be two contracting $C^1$ flows on $\R_+$ of generating vector fields $X$ and $Y$, respectively. Denote by $f,g$ the corresponding time-1 maps, and consider the following properties:

\vspace{0.1cm}

\noindent (i)  $\log \left( \frac{X}{Y} \right)$ and $x\mapsto \int^x_1(\frac1X-\frac1Y)$ are bounded (from above and below) on some interval $(0,\delta]$.

\vspace{0.1cm}

\noindent (ii)  $f$ and $g$ are conjugate by a homeomorphism that is bi-Lipschitz on some interval $(0,\delta]$.

\vspace{0.1cm}

\noindent  Then (i) implies (ii). Moreover, if $f$ is hyperbolic at $0$, then (ii) implies (i).
\end{prop}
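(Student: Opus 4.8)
The plan is to base both implications on the explicit flow conjugacy furnished by Lemma \ref{r:form}: taking $a=1$, the map $h(x) := g^{s(x)}(x)$ with $s(x) := \int_1^x\big(\tfrac1X-\tfrac1Y\big) = \tau_X(x)-\tau_Y(x)$ conjugates $(f^t)_t$ to $(g^t)_t$, hence conjugates $f$ to $g$, and is a $C^1$ diffeomorphism of $\R_+^*$ extending continuously so as to fix the origin. Since it conjugates the flows, differentiating $g^t\circ h = h\circ f^t$ at $t=0$ yields relation (\ref{ODE}), $Dh = (Y\circ h)/X$ on $\R_+^*$; using the flow-invariance (\ref{inv-flow}) applied to $(g^t)_t$ and $Y$, namely $Y(g^{s(x)}(x)) = Dg^{s(x)}(x)\,Y(x)$, this rewrites as
\[
Dh(x) = Dg^{s(x)}(x)\cdot \frac{Y(x)}{X(x)}.
\]
The whole proposition can then be read off from the behaviour of the two factors on the right.

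For (i) $\Rightarrow$ (ii) I would argue that this $h$ is the desired conjugacy. The factor $Y/X$ is bounded above and below by positive constants precisely because $\log(X/Y)$ is bounded (first half of (i)). For the factor $Dg^{s(x)}(x)$, the second half of (i) says $s$ is bounded, say $|s|\le S$ on $(0,\delta]$; since $(g^t)_t$ is a $C^1$ flow, the map $(t,x)\mapsto Dg^t(x)$ is continuous and positive on the compact set $[-S,S]\times[0,\delta]$, hence bounded above and below there by positive constants. Thus $Dh$ is squeezed between two positive constants on $(0,\delta]$, and as $h$ is a $C^1$ diffeomorphism of $\R_+^*$ fixing $0$, this makes $h$ bi-Lipschitz on $(0,\delta]$, giving (ii). No hyperbolicity is needed here.

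For (ii) $\Rightarrow$ (i), suppose $f$ is hyperbolic and $h$ is \emph{some} homeomorphism, bi-Lipschitz on $(0,\delta]$, with $hfh^{-1}=g$. First, Lemma \ref{l:pres-mult} gives $Dg(0)=Df(0)$, so $g$ is hyperbolic with the same multiplier; Lemma \ref{l:form-vf} then yields $X(x)\sim\lambda x \sim Y(x)$ near $0$ with a common $\lambda<0$, whence $X/Y\to 1$ and $\log(X/Y)$ is bounded near $0$ — the first half of (i). The delicate point is the second half, boundedness of $s=\tau_X-\tau_Y$; here the obstruction is that this $h$ is only bi-Lipschitz and need not be of the flow-conjugating form of Lemma \ref{l:uniq} (cf. the remark following that lemma), so I cannot invoke Proposition \ref{l:conj-time-1}. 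Instead I would pass to orbits: fix $x_0\le\delta$, set $x_n:=f^n(x_0)$ and $y_n:=g^n(h(x_0))$, so that $h(x_n)=y_n$ by conjugacy, while (\ref{tau-conj}) gives $\tau_X(x_n)=\tau_X(x_0)+n$ and $\tau_Y(y_n)=\tau_Y(h(x_0))+n$. Subtracting eliminates $n$ and reduces the claim to bounding $\tau_Y(y_n)-\tau_Y(x_n)=\int_{x_n}^{y_n}\tfrac1Y$. The bi-Lipschitz bound $h(x)\asymp x$ forces $y_n\asymp x_n$, and since $1/Y(s)\sim 1/(\lambda s)$ near $0$ the integral over an interval with endpoints in bounded ratio is uniformly bounded, so $\tau_X(x_n)-\tau_Y(x_n)$ stays bounded along the sequence.

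Finally I would fill in the fundamental domains: for $x\in[x_{n+1},x_n]$ the increments $\tau_X(x)-\tau_X(x_n)$ and $\tau_Y(x)-\tau_Y(x_n)$ are uniformly bounded — the first lies in $[0,1]$ by definition of $x_n$, and the second is again an integral of $1/Y\sim 1/(\lambda s)$ over an interval whose endpoint ratio is controlled by $x_n/x_{n+1}\to e^{-\lambda}$. Combining this with the bound along $(x_n)$ shows that $s=\tau_X-\tau_Y$ is bounded on $(0,x_0]$, which completes (i). The main obstacle throughout is exactly the low regularity of the conjugacy in the second implication: everything must be extracted from the discrete orbit data and the asymptotics $X,Y\sim\lambda x$, rather than from any smoothness of $h$.
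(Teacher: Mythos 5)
Your proposal is correct and follows essentially the same route as the paper's proof: the same explicit conjugacy $h=g^{s(x)}(x)$ with the factorization $Dh=(Y/X)\cdot Dg^{s}$ for (i)$\Rightarrow$(ii), and for (ii)$\Rightarrow$(i) the same orbit decomposition along $f^n(x_0)$, using the bi-Lipschitz control of $h(x_n)/x_n$ together with $Y\sim\lambda x$ to bound $\int_{x_n}^{h(x_n)}\frac1Y$, then filling in the fundamental domains. The only (harmless) deviation is that you obtain the boundedness of $\log(X/Y)$ by applying Lemma \ref{l:form-vf} to both flows after invoking Lemma \ref{l:pres-mult}, whereas the paper re-derives it via a three-term telescoping through $f-\id$ and $g-\id$; both are equivalent.
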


\begin{rem}
\label{r:parabolic}
For contracting parabolic flows at the origin, condition (ii) (and even $C^1$ conjugacy) does no longer imply (i). As a concrete example, the flows of the 
vector fields $X(x)=-x^2$ and $Y(x)=-2x^2$ are conjugate by a homothety despite these two vector fields do not satisfy (i). The study of conjugacy 
classes of parabolic vector fields will be the subject of another work \cite{parabolic}.
\end{rem}

\begin{proof}[Proof that (i) implies (ii)] 
Consider the map $h$ defined on $\R_+^*$ by 
$$h(x) := \tau_Y^{-1} \tau_X (x) = g^{\int_1^x(\frac1X-\frac1Y)}(x).$$
By Lemma \ref{r:form}, this is a $C^1$ diffeomorphism of $\R_+^*$ that conjugates $f$ to $g$ and extends to a homeomorphism of $\R_+$ by letting $h(0)=0$. Moreover, by definition of $\tau_X$ and $\tau_Y$ and by \eqref{inv-flow},
\begin{equation}
\label{e:Dh}
Dh(x) = \frac{Y(h(x))}{X(x)} = \frac{Y(x)}{X(x)}\cdot \frac{Y(g^s (x))}{Y(x)} = \frac{Y(x)}{X(x)}\cdot Dg^s (x),
\end{equation}
where $s = \int_1^x ( \frac1X-\frac1Y )$. Suppose that (i) holds, that is, $\left|\log\frac{X}{Y}\right|$ and $x\mapsto \left|\int^x_1(\frac1X-\frac1Y)\right|$ are bounded from above on $(0,\delta]$ by some constant $C$. Then \eqref{e:Dh} implies that $|\log Dh|$ is bounded from above by 
$$C+\max_{(t,x)\in [-C,C]\times(0,\delta]}|\log Dg^t(x)|.$$ 
Therefore, $h$ is bi-Lipschitz on $[0,\delta]$. 
\end{proof}

\begin{proof}[Proof that (ii) implies (i) in the hyperbolic case]
Conversely,  assume that there exists a homeomorphism $h$ conjugating $f$ to $g$ that is bi-Lipschitz on a neighborhood of the origin. Moreover, assume that $Df(0)=e^\lambda < 1$. 
By Lemma \ref{l:pres-mult}, we also have $Dg (0) = e^{\lambda}$. Now write 
$$
\log\left(\frac X Y\right) 
= \log\left(\frac{X}{f-\id}\right) + \log\left(\frac{f-\id}{g-\id}\right)+\log\left(\frac{g-\id}{Y}\right). 
$$
Since $f-\id\sim(e^\lambda-1) \, \id$ and $g-\id\sim(e^\lambda-1) \, \id$ close to the origin, the central term above converges to 0 at $0$. Moreover, by (\ref{e:equiv}), 
$$\frac{f-\id}{X} \longrightarrow \frac{e^{\lambda} - 1}{\lambda} 
\qquad \mbox{and} \qquad 
\frac{g-\id}{Y} \longrightarrow \frac{e^{\lambda}-1}{\lambda}$$
at $0$,  so the first and last terms above cancel each other in the limit. We thus conclude that $X(x) / Y(x) \to 1$ as $x \to 0$ and, in particular, $\log(X/Y)$ is bounded on $(0,\delta]$.
 
Note that $hf^t$ still conjugates $f$ to $g$ for all $t \in \R$. By adjusting $t$, we may assume that $h(1)=1$. Given a positive $x$ close to $0$, let $k \in \mathbb{N}$ be such that $f^{k+1} (1) < x \leq f^k (1)$. Then 
 \begin{eqnarray*}
  \int_1^x \left( \frac1X-\frac1Y \right) 
& =&   \int^{f^k(1)}_{1} \left( \frac1X-\frac1Y \right) + \int^x_{f^k(1)} \left( \frac1X-\frac1Y \right) \\
& = & \int^{f^k(1)}_{1} \frac1X - \int^{g^k(1)}_{1} \frac1Y 
 + \int^{g^k(1)}_{f^k(1)} \frac1Y  + \int^x_{f^k(1)} \left( \frac1X-\frac1Y \right) \\
 &=& \int^{g^k(1)}_{f^k(1)} \frac1Y  + \int^x_{f^k(1)} \left( \frac1X-\frac1Y \right) ,
 \end{eqnarray*}
 where the last equality follows from 
 $$\int_1^{f^k(1)} \frac{1}{X} = k = \int_1^{g^k(1)} \frac{1}{Y}.$$ 
Now writing $Y(x) = \lambda \, x \, (1+u(x))$ for some continuous function $u$ going to $0$ as $x \to 0$ as in Lemma \ref{l:form-vf}, we obtain 
\begin{equation}
\label{e:truc2}
\int^{g^k(1)}_{f^k(1)}\frac1Y 
= \frac1\lambda\int^{g^k(1)}_{f^k(1)}\frac{dy}{y(1+u(y))} 
= \frac{1}{\lambda} \int^{\log(g^k(1))}_{\log(f^k(1))}\frac{dz}{1+u(e^z)}=\frac{1}{|\lambda|}\log\left(\frac{f^k(1)}{g^k(1)}\right) \frac1{1+u(e^{z_k})},
\end{equation}
for some $z_k$ between $f^k(1)$ and $g^k(1)$. Now for $k$ big enough, $|u(y)|\le\frac12$ for every $y$ between $f^k(1)$ and $g^k(1)$, so 
\begin{equation*}
\left|\int^{g^k(1)}_{f^k(1)}\frac1Y\right|\le \frac2{|\lambda|}\left|\log\left(\frac{f^k(1)}{g^k(1)}\right)\right| = \frac2{|\lambda|}\left|\log\left(\frac{f^k(1)-0}{h(f^k(1))-h(0)}\right)\right|,
\end{equation*}
which is bounded since $h$ is bi-Lipschitz. Finally, 
\begin{equation}\label{e:truc-n}
\left|  \int^x_{f^k(1)} \left( \frac1X-\frac1Y \right)  \right| \leq 
\left|\int_{f^k(1)}^{f^{k+1}(1)}\left|\frac1X-\frac1Y\right|\right| 
= \left|\int_{f^k(1)}^{f^{k+1}(1)}\left|\frac1X\right|\cdot\left|1-\frac{X}Y\right|\right|
\le C\underbrace{\left|\int_{f^k(1)}^{f^{k+1}(1)}\left|\frac1X\right|\right|}_{1},
\end{equation}
where $C$ is an upper bound for  $\left|1-\frac{X}Y\right|$ (which exists since we already proved that $\log (\frac{X}{Y})$ is bounded). Putting all of this together, we conclude that 
$$\left| \int_1^x \left( \frac{1}{X} - \frac1Y \right)  \right|$$
is bounded on a certain interval $(0,\delta]$, as announced.
\end{proof}

The preceding proposition allows building a lot of examples of $C^1$ hyperbolic flows that are not pairwise bi-Lipschitz conjugate. For example, start with the linear vector field $X(x) = -x$ and consider a $C^1$ perturbation $X_{\alpha}$ that, for small values of $x$, satisfies
$$X_{\alpha}(x) := - \frac{x}{1 + \frac{\alpha}{\log (x)}}.$$
One can easily check that $X_{\alpha}$ is of class $C^1$ on $\R_+$. (Actually, $X_{\alpha}$ can be taken to be of class $C^{1+ac}$ on compact subsets of $\mathbb{R}_+$;  we will come back to this point in \S \ref{sec-var}.) Moreover, its multiplier at the origin equals $-1$ for all $\alpha$. 
Furthermore, though $\big| \log (\frac{X_{\alpha}}{X_{\beta}}) \big|$ remains bounded for $\alpha \neq \beta$ (actually, it converges to 0 as $x \to 0$), the expression $ \big| \int^x_1(\frac{1}{X_{\alpha}}-\frac{1}{X_{\beta}}) \big|$ behaves as 
$$\left| (\alpha - \beta ) \int^x_1 \frac{dy}{y \, \log (y)} \right|$$
which is not bounded near $0$. Proposition \ref{crit-conj-bil} then shows that the corresponding flows $(f^t_{\alpha})_t$ and $(f^t_{\beta})_t$ are not bi-Lipschitz conjugate for $\alpha \neq \beta$. In particular, none of the flows $(f^t_{\alpha})_t$ is bi-Lipschitz conjugate to the affine flow for $\alpha \neq 0$.

We can use previous discussions to obtain explicit conjugating maps and analyse the failure of the bi-Lipschitz property. Indeed, by Lemmas \ref{l:uniq} and \ref{r:form}, any map $h$ that conjugates $(f^t_{\alpha})_t$ to the linear flow $(f^t)_t$ (with vector field $X = X_0$) is given by $x \mapsto f^{s(x)}(x)=e^{-s(x)}x$, where, for small values of $a$ and $x$ and for some constant $c$,
$$s (x) = c + \int_a^x \left( \frac{1}{X_{\alpha}} - \frac{1}{X} \right)  = c - \alpha \int_a^x  \frac{dy}{y \, \log (y)} = 
c' + \log \big( (| \log(x) | )^{-\alpha} \big).
$$ 
This way, a very nice conjugating map arises, namely 
$$H_{\alpha} (x) = x \, \big( | \log(x) | \big)^{\alpha} $$ 
(where, again, equality holds for small $x$), and all the others are compositions of it with a homothety.

Of course, similar tricks produce $C^1$ contracting hyperbolic vector fields on $\R_+$ lying in many other (local) bi-Lipschitz conjugacy classes. For example, straightforward computations based on Proposition \ref{crit-conj-bil} show that the flow of $X_{\alpha}$ is in the same (local) bi-Lipschitz conjugacy class as the flow of 
 \footnote{
This example for $\alpha = 1$ was already considered in \cite[Exercise 4.1.12]{book}. 
However, our approach here is much more concrete (and is not based only on a ``miraculous'' direct integration). }
$$Y_{\alpha} (x) := - x \, \left( 1 - \frac{\alpha}{\log(x)} \right),$$
but (for $\alpha \neq 0$) this does not coincide with that of the flow of 
$$\bar{X}_{\alpha} (x) := - \frac{x}{1 + \frac{\alpha}{\log(x) \, \log ( | \log(x) | ) }}.$$ 
Besides, this is different from that of the flow of 
$$\bar{\bar{X}}_{\alpha} (x) := - \frac{x}{1+\frac{\alpha}{\log(x) \, \log ( | \log(x) |) \, \log ( \log ( | \log(x) | ) ) }},$$
etc. The idea behind the first example is, however, enough to prove Theorem \ref{t:A}.

\begin{proof}[Proof of Theorem \ref{t:A}] 
Let $X$ be a hyperbolically contracting $C^1$ vector field on $\R_+$.  Given $\alpha \neq 0$, consider a vector field $X_{\alpha}$ that, close to the origin, satisfies 
$$X_{\alpha} := \frac{X}{1 + \frac{\alpha X}{x \, \log(x)}}.$$
One can compute:
$$DX_{\alpha} 
= \frac{1}{\left( 1 + \frac{\alpha X}{x \log(x)}\right)^2} \left[ DX \, \left( 1 + \frac{\alpha \, X}{x \, \log(x)} \right) 
- \alpha \, \Big( \frac{X}{x} \Big) \cdot \frac{DX}{\log(x)} + \alpha \, \Big( \frac{X}{x} \Big)^2 \cdot \frac{( \log(x)+1)}{(\log(x))^2} \right].$$
The expression above is continuous on $(0,\delta]$ for small $\delta > 0$. Moreover, since there is $\lambda < 0$ such that $X(x) \sim \lambda x$ when $x \to 0$ (see Lemma \ref{l:form-vf}), it converges to $DX (0) = \lambda$ as $x \to 0$. One can then easily show that the $X_{\alpha}$ are $C^1$ and converge to $X$ in $C^1$ topology as $\alpha \to 0$ (on some interval $[0,\delta]$...). 

Now we compute for small $a,x$:
$$\int_a^x \frac{1}{X_{\alpha}} - \frac{1}{X_{\beta}} = ( \alpha - \beta) \int_a^x \frac{dy}{y \, \log (y)}.$$
Since the last integral diverges as $x \to 0$ when $\alpha\neq \beta$, Proposition \ref{crit-conj-bil} implies that the flows of $X_{\alpha}$ and $X_{\beta}$ are not bi-Lipschitz conjugate. 
\end{proof}


\section{Bi-Lipschitz conjugate, non $C^{1}$ conjugate germs} 

The proof of Theorem \ref{t:B} follows a similar strategy to that of Theorem \ref{t:A}. To carry it out, we first modify the criterion of Proposition \ref{crit-conj-bil} to a criterion of $C^1$ conjugacy.

\begin{prop} 
\label{crit-conj-C1}
Let $(f^t)_t$ and $(g^t)_t$ be two contracting $C^1$ flows on $\R_+$ of generating vector fields $X$ and $Y$, respectively. Denote by $f$ and $g$ the corresponding time-1 maps, and consider the following properties:

\vspace{0.1cm}

\noindent (i)  $\log \left( \frac{X}{Y}(x) \right)$ and $\int^x_1(\frac1X-\frac1Y)$ converge as $x \to 0$.

\vspace{0.1cm}

\noindent (ii) $f$ and $g$ are conjugate by a $C^1$ diffeomorphism.

\vspace{0.1cm}

\noindent  Then (i) implies (ii). Moreover, if $f$ is hyperbolic at $0$, then (ii) implies (i).
\end{prop}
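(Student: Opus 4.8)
The plan is to follow the architecture of the proof of Proposition \ref{crit-conj-bil}, systematically replacing ``bounded'' by ``convergent''. The canonical conjugating map is furnished by Lemma \ref{r:form}, and the computation of its derivative is already recorded in \eqref{e:Dh}: for $h = \tau_Y^{-1}\tau_X$, writing $s = \int_1^x(\frac1X - \frac1Y)$, one has $Dh(x) = \frac{Y(x)}{X(x)} \cdot Dg^s(x)$. Both implications will be read off this identity, the only new analytic ingredient being the joint continuity of $(t,y) \mapsto Dg^t(y)$ (part of the definition of a $C^1$ flow) together with the relation $Dg^t(0) = e^{\lambda t}$ coming from the flow relation in Lemma \ref{l:form-vf}.

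For (i)$\Rightarrow$(ii), I would take $h = \tau_Y^{-1}\tau_X$, which by Lemma \ref{r:form} is a $C^1$ diffeomorphism of $\R_+^*$ conjugating $f$ to $g$ and extending to a homeomorphism of $\R_+$ fixing the origin. Hypothesis (i) says that $\log(X/Y)$ converges, so $Y(x)/X(x)$ tends to a positive constant, and that $s=s(x)$ tends to a finite limit $s_0$. Since $(s(x),x)\to(s_0,0)$, joint continuity of the flow derivative gives $Dg^{s(x)}(x)\to Dg^{s_0}(0)>0$ (the flow relation forces $Dg^t(0)=e^{\mu t}$ for the multiplier $\mu$ of $Y$, \emph{cf.}\ Lemma \ref{l:form-vf}). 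Hence $Dh(x)$ converges, as $x\to 0$, to a strictly positive limit, and a standard mean value argument shows that $h$ is $C^1$ up to the origin with $Dh(0)>0$; therefore $h$ is a genuine $C^1$ diffeomorphism of $\R_+$.

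For the converse I would assume $f$ hyperbolic and that some $C^1$ diffeomorphism $h$ conjugates $f$ to $g$; by Proposition \ref{l:conj-time-1} it conjugates the flows, and by Lemma \ref{l:pres-mult} the two multipliers coincide, equal to some $\lambda<0$. Since a $C^1$ diffeomorphism is in particular bi-Lipschitz near the origin, the hyperbolic half of Proposition \ref{crit-conj-bil} already yields $X/Y\to 1$, so that $\log(X/Y)$ converges (to $0$); this settles the first quantity. For the second, I would use Lemma \ref{l:uniq} to write $h(x)=g^{s(x)}(x)$ with $s(x)=t+\int_1^x(\frac1X-\frac1Y)$, and then observe that applying $\tau_Y$ and using \eqref{tau-conj} gives the compact formula
$$s(x) = \tau_Y(h(x)) - \tau_Y(x) = \int_x^{h(x)} \frac{dy}{Y(y)}.$$
Writing $Y(y)=\lambda y(1+u(y))$ with $u\to 0$ as in Lemma \ref{l:form-vf}, this integral splits as $\frac{1}{\lambda}\log\frac{h(x)}{x}$ plus an error controlled by $\sup|u|\cdot\big|\log\frac{h(x)}{x}\big|$. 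Because $h$ is differentiable at $0$ with $h(0)=0$ and $Dh(0)>0$, the ratio $h(x)/x$ converges to $Dh(0)>0$; hence the main term converges and the error vanishes, so $s(x)$ converges and therefore so does $\int_1^x(\frac1X-\frac1Y)$.

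The step I expect to be the crux is precisely this last upgrade from boundedness to convergence in the converse. The bi-Lipschitz criterion only guaranteed that $\int_1^x(\frac1X-\frac1Y)$ stays bounded, and the extra rigidity comes entirely from the existence of a genuine derivative $Dh(0)>0$, which forces $h(x)/x$ to converge rather than merely oscillate between two positive bounds. The reformulation $s(x)=\int_x^{h(x)} 1/Y$ is what converts the analytic statement ``$h$ is differentiable at the origin'' into convergence of the relevant integral, and getting the error term to vanish (rather than just stay small) is where the argument genuinely uses $\lim h(x)/x$ instead of $\limsup$ and $\liminf$.
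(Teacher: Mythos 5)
Your proposal is correct and follows essentially the same route as the paper: the forward direction reads off \eqref{e:Dh} exactly as in the text, and the converse reduces to the identity $\int_1^x(\frac1X-\frac1Y)=\int_x^{h(x)}\frac{dy}{Y}$ (which the paper obtains by telescoping along $x=f^t(1)$ after normalizing $h(1)=1$, and you obtain via $\tau_Y$) followed by the substitution $Y(y)=\lambda y(1+u(y))$ and the convergence $h(x)/x\to Dh(0)>0$. You are in fact slightly more complete than the paper in explicitly noting that the convergence $\log(X/Y)\to 0$ is inherited from the hyperbolic half of Proposition \ref{crit-conj-bil}, since a $C^1$ conjugacy is bi-Lipschitz near the origin.
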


\begin{proof} 
To see that (i) implies (ii), recall relation (\ref{e:Dh}) for the conjugating map $h$: 
$$Dh(x) = \frac{Y(h(x))}{X(x)} =  \frac{Y(x)}{X(x)}\cdot Dg^s (x),$$
where $s =  \int^x_1(\frac1X-\frac1Y)$.  If both $\log (X/Y)$ and $x\mapsto \int^x_1(\frac1X-\frac1Y)$ converge at the origin, this implies that $Dh$ extends continuously to the origin by $Dh (0) \neq0$. Since $h$ is a $C^1$ diffeomorphism of $\R_+^*$, this shows that $f$ and $g$ are $C^1$ conjugate.

Conversely, assume that $f$ is hyperbolic at $0$ and that $f$ and $g$ are $C^1$ conjugate by some diffeomorphism $h$, which again can be assumed to satisfy $h(1)=1$ (by composing it with some $f^t$ if necessary). Then by Proposition \ref{l:conj-time-1}, $hf^t(1)=g^th(1)=g^t(1)$ for every $t\in\R$. Now if we write again $Y(x) = \lambda \, x \, (1+u(x))$ for some continuous function $u$ going to $0$ as $x \to 0$ (\emph{cf.} Lemma \ref{l:form-vf}), for every $x=f^t(1)>0$, we get, similarly as in the proof of Proposition \ref{crit-conj-bil},
\begin{align*}
\int_1^x \left(\frac1X-\frac1Y\right)
& = \underbrace{\int_1^{f^t(1)} \frac1X-\int_1^{g^t(1)}\frac1Y}_{t-t\,=\,0} + \int_{f^t(1)}^{g^t(1)}\frac1Y=\int_x^{h(x)}\frac1Y\\
&= \frac1\lambda\int^{h(x)}_{x}\frac{dy}{y(1+u(y))} 
= \frac{1}{\lambda} \int^{\log(h(x))}_{\log(x)}\frac{dz}{1+u(e^z)}
= \frac{1}{\lambda}\log\left(\frac{h(x)}{x}\right) \frac1{1+u(y_x)}
\end{align*}
for some $y_x$ between $x$ and $h(x)$, so this quantity converges to $\lambda^{-1}\log Dh(0)$, which completes the proof.
\end{proof}

As in the previous section, the proposition above allows building many examples of hyperbolic vector fields with the same multipliers at the origin whose flows are bi-Lipschitz conjugate yet not $C^1$ conjugate. For example, start with $X(x) = -x$ and consider a $C^1$ perturbation $\tilde{X}_{\alpha}$ that, for small values of $x$, satisfies
$$\tilde{X}_{\alpha}(x) := - \frac{x}{1 + \frac{\alpha \, \cos(\log(|\log(x)|))}{\log (x)}}.$$
Again, one readily checks that $\tilde{X}_{\alpha}$ is of class $C^1$ on $\R_+$ (actually it can be taken of class~$C^{1+ac}$ on compact subsets of $\mathbb{R}_+$). Moreover, while the value of $\log (\tilde{X}_{\alpha} / \tilde{X}_{\beta})$ goes to $0$ as $x \to 0$, for small $a,x$ we have 
$$\int_a^x \left( \frac{1}{\tilde{X}_{\alpha}} - \frac{1}{\tilde{X}_{\beta}} \right) 
= \int_a^x \frac{(\beta - \alpha) \, \cos (\log ( | \log (y) | ))}{y \, \log (y)} dy 
= (\beta - \alpha) \, \sin (\log (| \log (x) | )) + c.$$
Since this expression remains bounded but does not converge as $x \to 0$ if $\alpha\neq \beta$, there exists a locally bi-Lipschitz conjugacy from the flow of $\tilde{X}_{\alpha}$ to that of $\tilde{X}_{\beta}$, but no $C^1$ conjugacy. Actually, a conjugacy can be made explicit, namely, $\tilde{H}_{\alpha,\beta} = \tilde{H}_{\beta}^{-1} \tilde{H}_{\alpha}$, where 
$$\tilde{H}_{\alpha} (x) = x \, e^{\alpha \, \sin (\log (| \log (x)|))}.$$  

\begin{proof}[Proof of Theorem \ref{t:B}] 
We proceed similarly as for Theorem \ref{t:A}: starting with a hyperbolically contracting $C^1$ vector field $X$ on $\R_+$ and  $\alpha \neq 0$, consider a vector field $\tilde{X}_{\alpha}$ that, close to the origin, satisfies 
$$\tilde{X}_{\alpha} := - \frac{X}{1 + \frac{\alpha X \, \cos (\log ( | \log (x) | ))}{x \, \log(x)}}.$$
Checking the details is left to the reader. 
\end{proof}


\section{$C^1$ conjugate, non $C^{1+ac}$ conjugate diffeomorphisms} 
\label{sec-var}

It seems hard to obtain a general criterion of either $C^{1+bv}$ or $C^{1+ac}$ conjugacy. Nevertheless, we can somehow mimic the constructions from the previous sections in order to obtain hyperbolic vector fields that are $C^1$ conjugate yet not $C^{1+ac}$ conjugate. Indeed, we can even start with an appropriate candidate of conjugacy, and reverse the steps of the previous sections.

Let us begin by considering the conjugating map $h (x) = g^{s(x)} (x)$, where $(g^t)_t$ is the linear flow (associated to the vector field $X(x) = -x$). This means that $h(x) = e^{-s(x)} x$. Assume that this map conjugates the flow of a vector field
$$X(x) := - \frac{x}{1+u(x)}$$
to the linear flow and fixes $a \in \mathbb{R}_+^*$. Then, according to Lemma \ref{r:form}, 
$$s(x) = \int_a^x \left( \frac{1}{X(y)} - \left( - \frac{1}{y} \right) \right) \, dy = \int_a^x - \frac{u(y)}{y},$$
hence $u = - x \, Ds$. Note that $h, u$ and $X$ are uniquely determined by $s$. We summarize 
all of this below:
\begin{equation}\label{l:inv}
h(x) = e^{-s(x)} x, \qquad  u(x) = - x \, Ds (x), \qquad X(x) = - \frac{x}{1+u(x)}.
\end{equation}

\begin{lem} 
\label{l:conditions-ac} 
Let $s \!: \R_+ \to \R$ be a function which is smooth (at least $C^3$) on $\R_+^*$ and satisfies $s(0)=0$, and let $u :=-x\, Ds$. Assume  that the following conditions are satisfied for a certain $\delta > 0$:

\vspace{0.1cm}

\noindent {\bf (i)} the function $s$ is continuous; 

\vspace{0.1cm}

\noindent {\bf (ii)} the function $u$ is continuous, with $u(0) = 0$ and $u (x) \geq c > -1$ for some constant $c$ 
and for all $x$;

\vspace{0.1cm}

\noindent {\bf (iii)} the derivative $Du$ restricted to $(0,\delta]$ belongs to $L^1$;

\vspace{0.1cm}

\noindent {\bf (iv)} the restriction of $Ds$ to $(0,\delta]$ does not belong to $L^1$; 

\vspace{0.1cm}

\noindent {\bf (v)} the function $x \, Du$ extends continuously to the origin with value $0$ at this point; 

\vspace{0.1cm}

\noindent {\bf (vi)} the function $x \, (Du)^2$ belongs to $L^1$ when restricted to $(0,\delta]$;

\vspace{0.1cm}

\noindent {\bf (vii)} the function $D(x \, Du)$ belongs to $L^1$ on $(0,\delta]$.

\vspace{0.1cm}

\noindent Then the map $h$ defined as in \eqref{l:inv} is a $C^1$ diffeomorphism whose derivative has unbounded variation, but it conjugates the linear flow $(t,x) \mapsto e^{-t}x$ to the flow of a $C^{1+ac}$ vector field $X$ (namely, the one defined as in \eqref{l:inv}).
\end{lem}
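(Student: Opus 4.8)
The plan is to derive everything from two explicit computations, namely the first two derivatives of $h$ and the first two derivatives of $X$. Since $h$, $u$ and $X$ are all determined by $s$ via \eqref{l:inv}, once these derivatives are written down each of the seven hypotheses will be used to control one specific term. I would begin with the regularity of $h$. Differentiating $h(x)=e^{-s(x)}x$ and using $u=-x\,Ds$ gives $Dh(x)=e^{-s(x)}(1+u(x))$. Hypothesis (i) makes $e^{-s}$ continuous with $e^{-s(0)}=1$, and hypothesis (ii) makes $1+u$ continuous, equal to $1$ at the origin and bounded below by $1+c>0$; hence $Dh$ is continuous and strictly positive on $\R_+$, so $h$ is a $C^1$ diffeomorphism.

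Next I would check that $X=-x/(1+u)$ is a genuine contracting hyperbolic $C^1$ vector field, so that the conjugacy machinery applies. A direct computation gives $DX=-\frac{1+u-x\,Du}{(1+u)^2}$; hypotheses (ii) and (v) (the latter guaranteeing $x\,Du\to 0$) show that $DX$ extends continuously to the origin with $DX(0)=-1$, while $X(x)<0$ for $x>0$. Thus $X$ is $C^1$, contracting and hyperbolic with multiplier $-1$. The conjugacy assertion is then immediate: by construction the relation $u=-x\,Ds$ is exactly the one produced by Lemma \ref{r:form} applied to the pair consisting of $X$ and the linear field $Y(x)=-x$, so $h(x)=e^{-s(x)}x$ conjugates the linear flow to the flow of $X$ (this is precisely how $s$, $u$ and $X$ were reverse-engineered in \eqref{l:inv}).

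For the unbounded variation of $Dh$, I would differentiate once more to get $D^2h=e^{-s}\big[(1+u)\tfrac{u}{x}+Du\big]$. Here $\tfrac{|u|}{x}=|Ds|$, so $\big|(1+u)\tfrac{u}{x}\big|\ge(1+c)|Ds|$, which is non-integrable near $0$ by (iv), whereas $Du\in L^1$ by (iii) and $e^{-s}$ is bounded above and below on $[0,\delta]$. Hence $D^2h\notin L^1(0,\delta]$, and since $Dh$ is $C^2$ on $(0,\delta]$ its total variation there equals $\int_0^\delta|D^2h|=\infty$.

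Finally, and this is the main computation, I would show $X$ is $C^{1+ac}$ by proving $D^2X\in L^1(0,\delta]$, so that the continuous function $DX$ is absolutely continuous. Differentiating $DX$ yields
$$D^2X=\frac{(1+u)\big[x\,D^2u+2\,Du\big]-2x\,(Du)^2}{(1+u)^3},$$
and since $(1+u)^{\pm1}$ is bounded, it suffices to see the numerator lies in $L^1$. The term $2x\,(Du)^2$ is handled by (vi); the term $2\,Du\,(1+u)=2\,Du+2u\,Du$ by (iii) together with the boundedness of $u$; and the remaining term $(1+u)\,x\,D^2u$ is controlled by rewriting $x\,D^2u=D(x\,Du)-Du$, which belongs to $L^1$ by (vii) and (iii). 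Summing, the numerator is integrable, so $D^2X\in L^1$ and $X\in C^{1+ac}$. The main obstacle is precisely this last step: the bookkeeping in $D^2X$, where each of the three types of term must be matched with the correct hypothesis, and in particular the identity $x\,D^2u=D(x\,Du)-Du$ is what makes hypothesis (vii) usable — without it the second-order term $x\,D^2u$ has no a priori reason to be integrable.
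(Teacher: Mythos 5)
Your proposal is correct and follows essentially the same route as the paper's proof: the same computations of $Dh=e^{-s}(1+u)$, $D^2h=e^{-s}\bigl(Du-(1+u)Ds\bigr)$, $DX$ and $D^2X$, with each hypothesis matched to the same term (in particular the use of $D(x\,Du)=Du+x\,D^2u$ to make (vii) applicable is exactly the paper's grouping of the second-order term). The only additions are welcome but minor: you spell out the conjugacy assertion via the reverse-engineering of \eqref{l:inv} and check explicitly that $X$ is contracting and hyperbolic, points the paper leaves to the surrounding discussion.
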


\begin{proof} 
All regularity issues are concentrated near the origin, so we work on $[0,\delta]$. We first compute
$$Dh = e^{-s} + x \cdot (-Ds)\,  e^{-s} = e^{-s} \, (1 - x\, Ds) = e^{-s} \, (1+u),$$
which by (i) and (ii) shows that $h$ is a $C^1$ diffeomorphism. For the second derivative, one easily gets 
$$D^2 h = e^{-s} \, \big( Du - (1+u) Ds \big).$$
Since $s$ and $u$ are continuous (hence bounded on $[0,\delta]$), from (iii) and (iv) we deduce that $D^2 h$ 
does not belong to $L^1 ([0,\delta])$. As a consequence, $Dh$ has unbounded variation, and therefore $h$ 
is not a $C^{1+ac}$ diffeomorphism. 

As for the vector field $X$, we compute:
$$DX = - \frac{1}{1+u} + \frac{x \, Du}{(1+u)^2}.$$
By (ii) and (v), $X$ is of class $C^1$. For the second derivative, a straightforward computation gives 
$$D^2 X = \frac{Du}{(1+u)^2} + \frac{D (x \, Du)}{(1+u)^2} - \frac{2x (Du)^2}{(1+u)^3}.$$
By (ii), (iii), (vi) and (vii), $X$ is of class $C^{1+ac}$. 
\end{proof}

\vspace{0.35cm}

As a concrete example, take a smooth function that, for small $x>0$, satisfies 
\begin{equation}\label{e:ese}
s(x) := - \frac{\sin (\log (|\log (x)|))}{\log (|\log (x)|)}.
\end{equation}
This is a reparametrization of the famous function $x \mapsto x\, \sin (1/x)$, 
which is very well-known to be continuous but with unbounded variation on 
$(0,\delta]$ for, say, $\delta = e^{-10}$. Thus, $Ds$ does not belong to $L^1 ([0,\delta])$, and 
conditions (i) and (iv) of Lemma \ref{l:conditions-ac} are satisfied. 
Most of the remaining properties will follow from the fact that the change of 
variable $x \mapsto 1 / \log(|\log(x)|)$ is a very slow changing map. Concretely, we compute:
\begin{equation}\label{De-u}
u = -x\, Ds(x) 
= \frac{\sin (\log (|\log(x)|))}{|\log(x)| \, ( \log ( |\log(x) |) )^2} -  \frac{\cos (\log (|\log(x)|))}{|\log(x)| \, \log ( |\log(x) |)}.
\end{equation}
This clearly satisfies (ii) on a small interval containing the origin. For the derivative of $u$ one readily gets that $\, Du (x) \,$ equals  
\begin{eqnarray*}
&&
\frac{1}{x \, |\log(x)| \, \log (| \log (x) |)} 
\Big[ 
 \frac{\cos (\log (|\log(x)|))}{ |\log(x)| }  + 
 \frac{2 \, \cos (\log (|\log(x)|))}{|\log(x)|\, \log (|\log(x)|)}  \\
&-&  \frac{\sin (\log (|\log(x)|))}{ |\log(x)| \, \log (|\log(x)|)}  - 
 \frac{2 \, \sin (\log (|\log(x)|))}{|\log(x)|\, (\log (|\log(x)|))^2}  + 
  \frac{\sin (\log (|\log(x)|))}{ |\log(x)| } \Big]
\end{eqnarray*}
This may be written in a resumed form as follows: 
\begin{equation}\label{e:gen}
\frac{1}{x \, |\log(x)| \, \log (| \log (x) |)} \sum \frac{T (\log (|\log(x)|))}{|\log(x)|^{\alpha} \, (\log (|\log (x)|) )^{\beta}},
\end{equation}
where $T$ stands for a trigonometric function (either $\sin$ or $\cos$) and the exponents $\alpha$ and $\beta$ are nonnegative integers of which at least one is positive. Since the functions 
$$\frac{1}{x \, |\log(x)|^2 \, \log (| \log (x) |)} \qquad \mbox{ and } \qquad \frac{1}{x \, |\log(x)| \, (\log (| \log (x) |))^2} $$ 
are integrable on $[0,\delta]$, we have that $Du$ is in $L^1([0,\delta])$, which is condition (iii). Conditions (v) and (vi) also easily follow from this expression. Finally, in order to check condition (vii), we compute $D(x\, Du)$ by multiplying expression (\ref{e:gen}) by $x$ and then deriving. We obtain 
$$\sum \frac{DT (\log (|\log(x)|))}{x \, |\log(x)|^{\alpha + 2} \, (\log (|\log(x)|))^{\beta+1}} + $$
$$+ \sum \frac{T (\log (|\log(x)|))}{|\log(x)|^{2{\alpha+2}} \, (\log (|\log (x)|) )^{2{\beta+2}}} 
\left[ \frac{(\alpha + 1) \, |\log(x)|^{\alpha} (\log (|\log(x)|))^{\beta+1}}{x} \right] + $$
$$+ \sum \frac{T (\log (|\log(x)|))}{|\log(x)|^{2{\alpha+2}} \, (\log (|\log (x)|) )^{2{\beta+2}}} 
\left[ \frac{(\beta + 1) \, |\log(x)|^{\alpha+1} (\log (|\log(x)|))^{\beta}}{x \, |\log(x)|} \right],$$
and this expression belongs to $L^1([0,\delta])$. 

\vspace{0.15cm}

Observe that the flow $f^t := h^{-1} g^{t} h$ of the thus $C^{1+ac}$ vector field $X$ is made of $C^{1+ac}$ diffeomorphisms. This can be checked via direct computations, but we prefer to see it as a consequence of the next general proposition.

\vspace{0.2cm}

\begin{prop} 
\label{p:flow-ac}
The flow of a $C^{1+ac}$ contracting vector field on $[0,1)$ is a 1-parameter group of locally $C^{1+ac}$ diffeomorphisms.
\end{prop}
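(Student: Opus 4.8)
The plan is to reduce the statement to a single integrability estimate near the origin, and to extract that estimate from the contracting character of the flow. Away from $0$ there is nothing to do: since $X$ is $C^1$ and contracting, classical ODE theory gives that the flow $(f^t)_t$ is $C^1$ in $(t,x)$, that each $f^t$ is a $C^1$ diffeomorphism, and that on $\R_+^*$ relation \eqref{inv-flow} reads $Df^t=(X\circ f^t)/X>0$. In particular $Df^t$ is $C^1$, hence locally absolutely continuous, on $\R_+^*$, and the whole difficulty is concentrated at $x=0$. So I would fix $t$ and a small $\delta>0$, chosen so that the orbit segments $\bigcup_{s\in[\min(t,0),\max(t,0)]}f^s([0,\delta])$ stay in a compact subset of $[0,1)$, and work on $[0,\delta]$.

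The first step is to write the logarithmic derivative as a single integral along the orbit. Since $DX$ is continuous and $X$ does not vanish on $\R_+^*$, the function $DX/X$ is continuous there and is the derivative of $\log|X|$; integrating it from $x$ to $f^t(x)$ gives, for $x>0$,
$$\log Df^t(x)\;=\;\log\frac{|X(f^t(x))|}{|X(x)|}\;=\;\int_x^{f^t(x)}\frac{DX(y)}{X(y)}\,dy\;=:\;\Psi(x).$$
Because the flow is $C^1$ up to the boundary, $Df^t$ extends continuously and positively to $x=0$, so $\Psi$ extends continuously to $[0,\delta]$. As $Df^t=e^{\Psi}$ with $\Psi$ bounded on $[0,\delta]$, it suffices to prove that $\Psi$ is absolutely continuous there; then $Df^t=e^{\Psi}$ is absolutely continuous too, with $D^2f^t=Df^t\cdot D\Psi\in L^1$, which is exactly the $C^{1+ac}$ conclusion.

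For the second step I would differentiate $\Psi$ on $\R_+^*$ (legitimate, the integrand being continuous) and use $Df^t=(X\circ f^t)/X$ to simplify:
$$D\Psi(x)\;=\;\frac{DX(f^t(x))}{X(f^t(x))}\,Df^t(x)-\frac{DX(x)}{X(x)}\;=\;\frac{DX(f^t(x))-DX(x)}{X(x)}.$$
A function that is continuous on $[0,\delta]$, $C^1$ on $(0,\delta]$ and has integrable derivative is absolutely continuous, so everything comes down to showing $D\Psi\in L^1([0,\delta])$. This is the one place where absolute continuity of $DX$ enters: it lets me write $DX(f^t(x))-DX(x)=\int_x^{f^t(x)}D^2X$, with $D^2X\in L^1_{loc}$.

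The hard, and decisive, step is the near-origin bound, where a priori $1/X$ blows up. Take $t>0$ first, so $f^t(x)<x$ and
$$|D\Psi(x)|\;\le\;\frac{1}{|X(x)|}\int_{f^t(x)}^{x}|D^2X(y)|\,dy.$$
Integrating in $x$ and applying Tonelli to this nonnegative integrand, I rewrite the region $\{f^t(x)<y<x\}$ as $\{y<x<f^{-t}(y)\}$, obtaining
$$\int_0^{\delta}|D\Psi(x)|\,dx\;\le\;\int_0^{\delta}|D^2X(y)|\left(\int_{y}^{f^{-t}(y)}\frac{dx}{|X(x)|}\right)dy.$$
By the flow identity \eqref{int-flow}, the inner integral equals $t$ for every $y$, so the right-hand side is $t\int_0^{\delta}|D^2X|<\infty$: the contracting dynamics exactly compensates the singularity of $1/X$. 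This yields $D\Psi\in L^1$ and hence the result for $t>0$; the case $t<0$ is symmetric (now $x<f^t(x)$, and one integrates $|D^2X|$ over the compact set $\bigcup_{s\in[t,0]}f^s([0,\delta])\subset[0,1)$), while $t=0$ is trivial. Since the group law $f^{t+s}=f^t f^s$ is built into the flow, this shows $(f^t)_t$ is a one-parameter group of locally $C^{1+ac}$ diffeomorphisms. I expect the main obstacle to be precisely this compensation estimate; the rest is bookkeeping.
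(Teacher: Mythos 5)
Your argument is correct and follows essentially the same route as the paper: both reduce the problem to the identity $D\log Df^t = (DX\circ f^t - DX)/X$ and exploit the fact that $\int 1/|X|$ over an orbit segment of duration $t$ equals $t$, which is exactly the compensation of the singularity of $1/X$ at the origin. The only (cosmetic) difference is in the final bookkeeping: the paper decomposes $[0,a]$ into fundamental domains and bounds the oscillation of $DX$ on each to get $2t\cdot\var(DX;[0,a])$, while you write $DX\circ f^t - DX$ as an integral of $D^2X$ and apply Tonelli, obtaining the slightly cleaner bound $t\int_0^{\delta}|D^2X|$.
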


\begin{proof}
It is standard that the flow $(f^t)_t$ is a 1-parameter group of $C^1$ diffeomorphisms. Moreover, 
the invariance of $X$ under its flow gives $Df^t = \frac{X\circ f^t}{X}$ on $(0,1)$, so $f^t$ is $C^2$ therein. Therefore, for every $0 < a < 1$,
$$\var(\log Df^t; [0,a]) = \int_0^a|D\log Df^t|.$$
To prove that $f^t$ is a locally $C^{1+ac}$ diffeomorphism we hence need to show that the value of this integral is finite. Now using twice the equality $Df^t = \frac{X\circ f^t}{X}$, we obtain
\begin{eqnarray*}
D\log Df^t 
&=& D\log(X\circ f^t)-D\log X \\
&=& (D\log X)\circ f^t \cdot Df^t - D\log X \\
&=& \tfrac{DX\circ f^t\cdot Df^t}{X\circ f^t}-\tfrac{DX}X \\
&=& \tfrac{DX\circ f^t - DX}{X}.
\end{eqnarray*}
Thus, for $t > 0$,
\begin{align*}
\var(\log Df^t; [0,a]) 
&=  \int_0^a\left|\tfrac{DX\circ f^t (x) - DX (x)}{X (x)}\right| dx \\
&= \sum_{n\geq 0}  \int_{f^{(n+1)t}(a)}^{f^{nt}(a)}\left|\tfrac{DX \circ f^t (x) - DX (x)}{X (x)}\right| dx \\
&\le  \sum_{n\geq 0} \max_{[f^{(n+1)t}(a),f^{nt}(a)]}\left|DX\circ f^t - DX\right| \underbrace{\int_{f^{(n+1)t}(a)}^{f^{nt}(a)}\left|\tfrac{1}{X (x)} \right| dx}_{t}\\
&\le \textcolor{red}{2} \, t\cdot\var(DX;[0,a]),
\end{align*}
and a similar estimate holds for $t<0$ changing $t$ by $|t|$ (and the interval $[0,a]$ by $[0,f^{-t}(a)]$ in the last expression).  

Note moreover that this upper bound converges to $0$ as $|t|$ goes to $0$. This easily implies that $t\mapsto\log Df^t$ is continuous in the $C^{ac}$ topology, which completes the proof.
\end{proof}

\vspace{0.2cm}

\begin{rem}
Thanks to the control on $\var\log Df^t$ in the proof above, Proposition \ref{p:flow-ac} easily extends to complete $C^{1+ac}$ vector fields defined on  intervals (with no restriction on the vanishing set). 
\end{rem}

\vspace{0.3cm}

In order to prove Theorem \ref{t:C}, we next move to a parametric version of the previous example. Fix $\alpha \in \mathbb{R}$ and consider the function $s_{\alpha} (x) := \alpha \, s(x)$, where  $s$ is defined as in (\ref{e:ese}). Let $u_{\alpha}(x) := - x \, Ds_{\alpha} (x),$ and let $X_{\alpha}$ be the vector field defined by 
$$X_{\alpha}(x) := - \frac{x}{1+u_{\alpha}(x)}.$$
The corresponding flow, denoted $(f_{\alpha}^t)_t$, is conjugate to the standard linear flow $(g^t)_t$ via the map  $h_{\alpha}(x) = e^{-s_{\alpha}(x)} x.$  The estimate given in the next lemma will be crucial below.

\vspace{0.1cm}

\begin{lem}
\label{l:estimate-D2D}
For a fixed small $\delta > 0$, as $\varepsilon$ goes to 0 we have the asymptotic estimate
$$\int_{\varepsilon}^{\delta} | Ds | \, \, \sim \, \, \frac{ 2 }{\pi} \, \log (\log ( | \log (\varepsilon)| ) ).$$
\end{lem}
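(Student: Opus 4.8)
The goal is to compute the leading asymptotics of $\int_\varepsilon^\delta |Ds|$ as $\varepsilon \to 0$, where $s(x) = -\sin(\log(|\log x|))/\log(|\log x|)$. The natural first move is to substitute $w = \log(|\log x|)$, so that the oscillation becomes a clean $\sin w$ and $\cos w$ dependence. Since for small $x>0$ we have $\log x < 0$, write $|\log x| = -\log x$; then $w = \log(-\log x)$, and as $x$ runs from $\varepsilon$ up to $\delta$, $w$ runs over a large interval $[\log(-\log\delta), \log(-\log\varepsilon)] = [w_0, W]$ with $W = \log(|\log\varepsilon|) \to +\infty$. First I would record the differential: from $w = \log(-\log x)$ one gets $dw = \tfrac{dx}{x\log x} = -\tfrac{dx}{x|\log x|}$, so $\tfrac{dx}{x} = -|\log x|\,dw$, and crucially $|\log x| = e^{w}$.

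\textbf{Reducing the integrand.} Next I would differentiate $s$. Since $s = -\sin(w)/w$ with $w = w(x)$, the chain rule gives $Ds = -\tfrac{d}{dw}\!\left(\tfrac{\sin w}{w}\right)\cdot \tfrac{dw}{dx}$, and $\tfrac{d}{dw}(\sin w / w) = \tfrac{\cos w}{w} - \tfrac{\sin w}{w^2}$. Multiplying by $|dx|$ and using the substitution above, the integral $\int_\varepsilon^\delta |Ds|\,dx$ transforms into
\begin{equation*}
\int_{w_0}^{W} \left| \frac{\cos w}{w} - \frac{\sin w}{w^2} \right| \, dw,
\end{equation*}
where I have absorbed the Jacobian factors (the $1/x$ from $Ds$ and the $x$ from $dx = -x|\log x|\,dw/|\log x|$; one checks the $|\log x| = e^w$ factors cancel, leaving only the $w$-measure). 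The second term $\sin w / w^2$ is integrable and hence contributes $O(1)$, negligible against the expected divergence. So the problem reduces to the asymptotics of $\int_{w_0}^{W} \tfrac{|\cos w|}{w}\, dw$.

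\textbf{The averaging step.} This is the heart of the argument and the main obstacle to state cleanly: one must show $\int_{w_0}^{W} \tfrac{|\cos w|}{w}\,dw \sim \tfrac{2}{\pi}\log W$. The mechanism is that $|\cos w|$ has mean value $\tfrac{1}{\pi}\int_0^\pi |\cos| = \tfrac{2}{\pi}$, while $1/w$ varies slowly; so over each period the weighted integral behaves like $\tfrac{2}{\pi}\cdot\tfrac{1}{w}$, and $\int \tfrac{2}{\pi w}\,dw = \tfrac{2}{\pi}\log w$. To make this rigorous I would write $|\cos w| = \tfrac{2}{\pi} + r(w)$, where $r$ is the mean-zero periodic remainder with a convergent Fourier series (only even harmonics $\cos(2kw)$). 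The main term yields $\tfrac{2}{\pi}\int_{w_0}^W \tfrac{dw}{w} = \tfrac{2}{\pi}(\log W - \log w_0) \sim \tfrac{2}{\pi}\log W$. The remainder $\int_{w_0}^W \tfrac{r(w)}{w}\,dw$ stays bounded as $W \to \infty$: integrating $\cos(2kw)/w$ by parts (or invoking the convergence of $\int^\infty \cos(2kw)/w\,dw$ as an improper oscillatory integral, uniformly summable over $k$ thanks to the rapid decay of the Fourier coefficients of $|\cos|$) gives an $O(1)$ bound. Recalling $W = \log(|\log\varepsilon|)$, we get $\log W = \log\log(|\log\varepsilon|)$, which is exactly the claimed $\tfrac{2}{\pi}\log(\log(|\log\varepsilon|))$.

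\textbf{Expected difficulty.} The only delicate point is justifying that the oscillatory remainder contributes $O(1)$ rather than something growing; everything else is a routine substitution. I would handle the remainder either by the second mean value theorem for integrals applied to $1/w$ against the oscillating factor, or by term-by-term integration by parts on the Fourier expansion of $|\cos w|$, both of which produce a bound independent of $W$. Since an additive $O(1)$ is swallowed by the $\sim$ in the statement, no sharper control is needed, and the proof concludes.
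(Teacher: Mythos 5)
Your argument is correct and reaches the right constant, but the key asymptotic evaluation is carried out by a different mechanism than in the paper. Both proofs begin with the same reduction: your change of variable $w=\log(|\log x|)$ is exactly the paper's reparametrization $s(x)=\hat s\bigl(1/\log(|\log x|)\bigr)$ with $\hat s(y)=-y\sin(1/y)$, written through $y=1/w$, and both exploit that total variation is invariant under this monotone substitution (which, incidentally, is the cleanest way to phrase your ``the Jacobian factors cancel'' step). From there the paper proceeds combinatorially: it locates the critical points of $\hat s$ via the equation $\tan a = a$, notes that $\hat s$ is monotone between consecutive critical points $1/a_{k+1},1/a_k$ and vanishes at the interlacing points $1/(k\pi)$, and sums the amplitudes $|\hat s(1/a_k)|=1/\sqrt{1+a_k^2}\sim 1/(k\pi)$ to obtain $\tfrac{2}{\pi}\log K$ with $K\sim\log(|\log\varepsilon|)$. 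You instead evaluate $\int_{w_0}^{W}\bigl|\tfrac{\cos w}{w}-\tfrac{\sin w}{w^2}\bigr|\,dw$ analytically: the $\sin w/w^2$ term is integrable hence $O(1)$, and $\int_{w_0}^{W}|\cos w|/w\,dw$ is handled by replacing $|\cos|$ with its mean $2/\pi$ plus a mean-zero periodic remainder whose weighted integral stays bounded (the bounded-primitive integration by parts already suffices here; the Fourier expansion is an unnecessary luxury). Both routes yield $\tfrac{2}{\pi}\log W+O(1)$ with $W=\log(|\log\varepsilon|)$, hence the stated estimate. Your averaging argument is fully rigorous and avoids any discussion of the transcendental equation $\tan a=a$; the paper's route, in exchange, exhibits the precise oscillation pattern of $\hat s$ (monotonicity intervals and zeros), which fits its explicit comparison with the classical unbounded-variation example $x\sin(1/x)$. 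Either proof is acceptable.
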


\begin{proof} One can proceed via direct computations, but we prefer to work with the auxiliary function $\hat{s} (x) := - x \, \sin (1/x)$. 
Indeed, the relation \, $s (x) = \hat{s}( 1 / \log (|\log(x)|))$ \, gives 
\begin{equation}\label{eq-change} 
\int_{\varepsilon}^{\delta} | Ds | 
= \mathrm{var} (s; [\varepsilon,\delta]) 
= \mathrm{var} (\hat{s} ; [1 / \log(|\log (\varepsilon)|) , 1 /  \log (|\log(\delta)|) ]). 
\end{equation}
Note that the derivative 
$$D \hat{s} (x) = \frac{1}{x} \cos \left(\frac{1}{x} \right) - \sin \left( \frac{1}{x} \right) $$
vanishes at points $x$ at which 
$$\tan \big( \frac{1}{x} \big) = \frac{1}{x}.$$ 
Now the points $a \geq 0$ satisfying $\tan (a) = a$ form an increasing sequence $(a_k)$ such that $a_k \in [k \pi - \pi/2, k \pi + \pi / 2]$. 
Thus, the function $\hat{s}$ is monotone on each interval $[1/a_{k+1},1/a_k]$, and vanishes at each point $x = 1 / k \pi$. 
From this, one easily concludes that 
\begin{equation}\label{eq-otra}
\mathrm{var} (\hat{s} ; [1 / \log(|\log (\varepsilon)|) , 1 /  \log (|\log(\delta)|) ])
 \, \sim \,  \sum_{k=1}^K | \hat{s} (1 / a_k) | \!+\! | \hat{s} (1 / a_{k+1})| 
\, \sim \, 2 \sum_{k=1}^K | \hat{s} (1/a_k)|,
\end{equation}
where $K \sim \log ( |\log (\varepsilon)|)$ \, (this last estimate follows from that 
$a_K \sim \log (|\log (\varepsilon)|)$). Now the equality $\tan (a_k) = a_k$, gives 
$$| \hat{s} (1 / a_k)| = \left| \frac{\sin (a_k) }{a_k} \right| = | \cos (a_k) | = \frac{1}{\sqrt{1+\tan^2(a_k)}} = \frac{1}{\sqrt{1+a_k^2}},$$
from which we conclude that  
$$2 \sum_{k=1}^K | \hat{s} (1/a_k)| \,\,
= \, \, 2 \sum_{k=1}^K \frac{1}{\sqrt{1+a_k^2}} \,\, 
\sim \,\, 2 \sum_{k=1}^K \frac{1}{a_k} \,\, 
\sim \,\, 2 \sum_{k=1}^K \frac{1}{k \pi } \,\, 
\sim \,\, \frac{2}{\pi} \log (K).$$
Since $K \sim \log ( |\log (\varepsilon)|)$, an application of (\ref{eq-change}) and (\ref{eq-otra}) closes the proof. 
\end{proof}

\vspace{0.35cm}

Now consider two parameters $\alpha$ and $\beta$. The conjugacy relations with respect to the linear flow $(g^t)_t$, namely \esp $h_{\alpha} f^t_{\alpha} h_{\alpha} ^{-1} = g^t = h_{\beta} f^t_{\beta} h_{\beta}^{-1},$ \esp imply 
$$H f_{\alpha}^t H^{-1} = f_{\beta}^t,$$
where $H = H_{\alpha,\beta} := h_{\beta}^{-1} h_{\alpha}$. Up to composition with a member of the flow $(f^t_{\alpha})_t$, this conjugacy map $H$ is unique. The next proposition implies that, for $\alpha \neq \beta$, there is no $C^{1+ac}$ conjugacy between $(f^t_{\alpha})_t$ and $(f^t_{\beta})_t$, and thus closes the proof of Theorem \ref{t:C}.

\vspace{0.2cm}

\begin{lem} 
For $\alpha \neq \beta$, the derivative of the diffeomorphism $H = H_{\alpha,\beta}$ 
has unbounded variation on any interval containing the origin.
\end{lem}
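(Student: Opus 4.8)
The goal is to show that $H=H_{\alpha,\beta}=h_\beta^{-1}h_\alpha$ has derivative of unbounded variation near the origin whenever $\alpha\neq\beta$. Since $h_\gamma(x)=e^{-\gamma s(x)}x$ with $s$ as in \eqref{e:ese}, the plan is to write $H$ explicitly and reduce the variation of $DH$ to a quantity we already control, namely $\int_\varepsilon^\delta|Ds|$, which by Lemma \ref{l:estimate-D2D} blows up like $\tfrac{2}{\pi}\log(\log(|\log\varepsilon|))$ as $\varepsilon\to 0$.

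\textbf{First steps.}
First I would obtain a usable formula for $H$ and $DH$. On $\R_+^*$ both $h_\alpha$ and $h_\beta$ are smooth, so $H$ is smooth there and all the difficulty is concentrated at $0$; thus it suffices to estimate $\var(DH;(0,\delta])=\int_0^\delta|D^2H|$ (using that $DH$ is continuous, so its variation equals the $L^1$-norm of $D^2H$ when the latter exists a.e.\ and is locally integrable away from where it fails). Writing $H(x)=h_\beta^{-1}(h_\alpha(x))$ and recalling $h_\gamma(x)=e^{-\gamma s(x)}x$, a direct computation gives $DH$ and then $D^2H$ as combinations of $s$, $Ds$, $D^2s$ and the exponential factors $e^{(\beta-\gamma')s}$; since $s$ and its exponential are bounded near $0$, the \emph{leading} contribution to $D^2H$ should be a nonzero multiple of $(\alpha-\beta)D^2s$ (plus lower-order terms controlled by $Ds$ and $(Ds)^2$, which are already handled by the integrability facts established when verifying Lemma \ref{l:conditions-ac} for $s$). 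The second step is therefore to isolate this leading term and show the remaining terms are in $L^1((0,\delta])$.

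\textbf{Core estimate.}
The heart of the matter is that $D^2s\notin L^1$ near $0$ while the genuinely lower-order pieces stay integrable, so the integral $\int_0^\delta|D^2H|$ diverges. Rather than bounding $\int|D^2s|$ directly, the cleaner route is to recognize that $\var(Ds;(0,\delta])$ is exactly the object estimated in Lemma \ref{l:estimate-D2D}: indeed $\int_\varepsilon^\delta|Ds|=\var(s;[\varepsilon,\delta])$ there, and by the same slow-variable change of coordinates $x\mapsto 1/\log(|\log x|)$ one reduces $\var(Ds)$ to the variation of the derivative of the model function $\hat s(x)=-x\sin(1/x)$, whose unbounded oscillation is classical. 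So I would mirror the argument of Lemma \ref{l:estimate-D2D}: transport the variation of $Ds$ to the $\hat s$-model, observe that the sum over the monotonicity intervals $[1/a_{k+1},1/a_k]$ diverges, and conclude $\var(Ds;(0,\delta])=\infty$. Since $D(DH)$ agrees with $(\alpha-\beta)\,c\,Ds$ up to a bounded-variation remainder (the factor from differentiating the smooth envelope), $\var(DH;(0,\delta])=\infty$ as well.

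\textbf{Main obstacle.}
The technical obstacle is the bookkeeping in the second step: one must verify rigorously that all the terms in $D^2H$ other than the leading $(\alpha-\beta)$-multiple of $Ds$-type oscillation contribute only finite variation, i.e.\ that they lie in $L^1((0,\delta])$. These are precisely the terms built from $s$, $(Ds)^2$, $x\,D^2s$, and products thereof against bounded exponentials; the integrability of each follows from the same computations used to check conditions (iii), (v), (vi), (vii) of Lemma \ref{l:conditions-ac} applied to $s_\alpha$ and $s_\beta$, since those conditions were exactly designed to make $X_\alpha$ of class $C^{1+ac}$. The key point I must not lose is that the coefficient of the divergent term is proportional to $\alpha-\beta$ and hence nonzero precisely in the regime of interest, so the divergence cannot be cancelled by the remainder. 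Once this separation of a divergent leading term from an $L^1$ remainder is established, the conclusion that $DH$ has unbounded variation on every interval containing the origin is immediate.
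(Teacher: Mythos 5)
Your overall skeleton (write $H=h_\beta^{-1}h_\alpha$, extract from $D^2H$ a divergent term with coefficient proportional to $\alpha-\beta$, control the rest by the $L^1$ conditions of Lemma \ref{l:conditions-ac}, and quantify the divergence via Lemma \ref{l:estimate-D2D}) is the same as the paper's, but two of your key steps are wrong or unjustified as stated. First, you misidentify the divergent driver. The non-integrable part of $D^2h_\gamma/Dh_\gamma$ is $-\gamma\,Ds$ (indeed $\log Dh_\gamma=-\gamma s+\log(1+u_\gamma)$, so $D\log Dh_\gamma=-\gamma\,Ds+Du_\gamma/(1+u_\gamma)$, and $Du_\gamma\in L^1$ by condition (iii)); the relevant fact is $Ds\notin L^1$, i.e.\ $\var(s;(0,\delta])=\infty$, which is condition (iv) and exactly what Lemma \ref{l:estimate-D2D} quantifies as $\int_\varepsilon^\delta|Ds|=\var(s;[\varepsilon,\delta])\sim\frac2\pi\log\log|\log\varepsilon|$. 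You instead declare that ``$D^2s\notin L^1$'' is the heart of the matter and speak of $\var(Ds;(0,\delta])$, conflating it with $\var(s)$; moreover the change of variables $x\mapsto1/\log(|\log x|)$ does \emph{not} transport $\var(Ds)$ to $\var(D\hat s)$ (total variation of a function is a reparametrization invariant, total variation of its derivative is not). And there is no uncancelled $D^2s$ term in $D^2H$: the $x\,D^2s$ piece recombines with a $Ds$ piece into $Du\in L^1$.

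Second, and more seriously, your reduction of $D^2H$ to ``$(\alpha-\beta)c\,Ds$ plus an integrable remainder'' skips the real difficulty. By the cocycle relation, $D^2H/DH=D^2h_\alpha/Dh_\alpha-(D^2h_\beta/Dh_\beta)\circ H\cdot DH$, so the two divergent contributions are $-\alpha\,Ds(x)$ and $+\beta\,Ds(H(x))\,DH(x)$: they are evaluated at different points, and combining them pointwise into $-(\alpha-\beta)Ds(x)$ plus an $L^1$ error would require proving $Ds\circ H\cdot DH-Ds\in L^1((0,\delta])$, which you never address and which is delicate precisely because $\int_\varepsilon^\delta|Ds|$ diverges only at the very slow rate $\log\log|\log\varepsilon|$. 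The paper avoids this entirely by working at the level of integrals: it shows $\int_\varepsilon^\delta|D^2h_\alpha/Dh_\alpha|\sim\frac{2\alpha}\pi\log\log|\log\varepsilon|$ while $\int_\varepsilon^\delta|(D^2h_\beta/Dh_\beta)\circ H|\cdot DH=\int_{H(\varepsilon)}^{H(\delta)}|D^2h_\beta/Dh_\beta|\sim\frac{2\beta}\pi\log\log|\log\varepsilon|$ (using that $H$ is $C^1$ with $DH(0)=1$), and concludes by the reverse triangle inequality that $\int_\varepsilon^\delta|D^2H/DH|$ grows like $\frac{2(\alpha-\beta)}\pi\log\log|\log\varepsilon|$. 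This is why Lemma \ref{l:estimate-D2D} provides a precise asymptotic constant rather than mere divergence: the constant is what guarantees the two divergent integrals cannot cancel. Without either the pointwise $L^1$ comparison or this integral-asymptotics argument, your proof does not close.
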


\begin{proof} 
Since $H$ is a $C^1$ diffeomorphism that is $C^2$ outside of the origin, showing that $DH$ has unbounded variation on any interval $[0,\delta]$ is equivalent to showing that the {\em affine derivative} $D \log DH = D^2 H / D H$ does not belong to $L^1 ([0,\delta])$. This is easier to handle. Indeed, using the cocycle relation of this expression, we obtain
$$\frac{D^2 H}{D H} = \frac{D^2 h_{\alpha}}{D h_{\alpha}} - \frac{D^2 h_{\beta}}{D h_{\beta}} \circ H \cdot DH.$$
Assume that $\alpha > \beta$ (the case of a reverse inequality is analogous). Then,
$$\frac{D^2 h_{\alpha}}{D h_{\alpha}} = - Ds_{\alpha} +\frac{D u_{\alpha}}{1 + u_{\alpha}} 
= - \alpha \,  Ds + \frac{D u_{\alpha}}{1 + u_{\alpha}} .$$
By conditions (ii) and (iii) of Lemma \ref{l:conditions-ac}, the fraction on the right belongs to $L^1 ([0,\delta])$. 
Thanks to Lemma \ref{l:estimate-D2D}, this allows to conclude that 
$$\int_{\varepsilon}^{\delta} \left| \frac{D^2 h_{\alpha}}{D h_{\alpha}} \right|  \sim  \frac{ 2\, \alpha}{\pi} \, \log (\log (|\log (\varepsilon)|)).$$
Similarly, since $H$ is a $C^1$ diffeomorphism with $DH(0) =1$, we have 
$$\int_{H(\varepsilon)}^{H(\delta)} \left| \frac{D^2 h_{\beta}}{D h_{\beta}} \right|  
\sim  \frac{ 2\, \beta}{\pi} \, \log (\log (|\log (H(\varepsilon))|)) 
\sim  \frac{ 2\, \beta}{\pi} \, \log (\log ( | \log (\varepsilon)|)).$$
The inequality 
$$\int_{\varepsilon}^{\delta} \left| \frac{D^2 H}{D H} \right| 
\geq 
\int_{\varepsilon}^{\delta} \left| \frac{D^2 h_{\alpha}}{D h_{\alpha}} \right| 
- \int_{\varepsilon}^{\delta} \left| \frac{D^2 h_{\beta}}{D h_{\beta}} \circ H \cdot DH \right| 
= 
\int_{\varepsilon}^{\delta} \left| \frac{D^2 h_{\alpha}}{D h_{\alpha}} \right| 
- \int_{H(\varepsilon)}^{H(\delta)} \left| \frac{D^2 h_{\beta}}{D h_{\beta}}  \right| $$
then shows that $ \int_{\varepsilon}^{\delta} \left| \frac{D^2 H}{D H} \right|$ is of order \, 
$2 \, (\alpha - \beta) \, \log (\log ( |\log(\varepsilon)|)) / \pi$, \, which implies that $D^2 H / DH$ does not belong to $L^1([0,\delta])$. This closes the proof.
\end{proof}

\begin{rem}
\label{weak} 
Unfortunately, the perturbation methods used to prove Theorems \ref{t:A} and \ref{t:B} fail in the $C^{1+ac}$ setting when one starts with an arbitrary hyperbolic vector field. These seem to work, however, for  a vector field $X$ for which $DX$ has a certain logarithmic modulus of continuity for the derivative. We did not pursue the computations since we did not find a strategy for the general case. We thus leave open the question below.
\end{rem}

\begin{qs} 
Does the $C^1$ conjugacy class of {\em every} germ of hyperbolic $C^{1+ac}$ vector field contain uncountably many $C^{1+ac}$ conjugacy classes ? 
\end{qs}

\vspace{0.2cm}


\section*{Declarations}
 
\subsection*{Ethical Approval }
Non applicable

\subsection*{Competing interests} 
The authors have no relevant financial or non-financial interests to disclose.

\subsection*{Authors' contributions }
Both authors have contributed equally to the work.

\subsection*{Funding}
Hélène Eynard-Bontemps was partially funded by CNRS (Centre National de Recherche Scientifique, France), IRGA project ADMIN (Grenoble INP - Université Grenoble Alpes), Centro de Modelamiento Matemático (CMM, FB210005, BASAL funds for centers of excellence from ANID-Chile).
Andrés Navas was supported by Fondecyt research project 1220032.
 
\subsection*{Availability of data and materials}
Non applicable


\begin{footnotesize}

\vspace{0.42cm}

\noindent {\bf H\'el\`ene Eynard-Bontemps} \hfill{\bf Andr\'es Navas}

\noindent Institut Fourier \hfill{ Dpto. de Matem\'aticas y C.C.}

\noindent  Universit\'e Grenoble Alpes \hfill{ Universidad de Santiago de  Chile}

\noindent 100 rue des Math\'ematiques \hfill{Alameda Bernardo O'Higgins 3363}

\noindent 38610 Gi\`eres, France \hfill{Estaci\'on Central, Santiago, Chile} 

\noindent helene.eynard-bontemps@univ-grenoble-alpes.fr \hfill{andres.navas@usach.cl}

\end{footnotesize}
\end{document}